\documentclass[journal]{IEEEtran}
\usepackage{defieee}
\pdfminorversion=4
\def\revised#1{{\color{black}#1}}
\def\deleted#1{{\color{black}#1}}
\def\secondrevised#1{{\color{black}#1}}
\usepackage{todonotes}
\allowdisplaybreaks
\begin{document}

\title{Stabilizing Stochastic Predictive Control under Bernoulli Dropouts}
\author{Prabhat\ K.\ Mishra,~\IEEEmembership{Student Member,~IEEE,}
        Debasish~Chatterjee,~\IEEEmembership{}%
        and~Daniel~E.~Quevedo,~\IEEEmembership{Senior Member,~IEEE}
\thanks{Prabhat K. Mishra and Debasish Chatterjee are with Systems and Control Engineering, Indian Institute of Technology Bombay, Mumbai, India.  
        \tt{ prabhat@sc.iitb.ac.in, dchatter@iitb.ac.in 
        }}
\thanks{Daniel E. Quevedo is with Department of Electrical Engineering (EIM-E),
 Paderborn University,   Germany. \tt{dquevedo}@ieee.org}        
\thanks{D.\  Chatterjee was supported in part by the grant 12IRCCSG005 from IRCC, IIT Bombay.}
\thanks{The authors thank Peter Hokayem for helpful discussions.}}%

{}
\maketitle
\begin{abstract}
This article presents \revised{ tractable and recursively feasible optimization-based controllers for stochastic linear systems with bounded controls.} The stochastic noise in the plant is assumed to be additive, zero mean \revised{and fourth moment bounded}, \revised{and the control values transmitted over an erasure channel. Three different transmission protocols are proposed having different requirements on the storage and computational facilities available at the actuator}. \revised{We optimize a suitable stochastic cost function accounting for the effects of both the stochastic noise and the packet dropouts over affine saturated disturbance feedback policies}. The proposed controllers ensure mean square boundedness of the states in closed-loop for \revised{all positive values of control bounds and any non-zero probability of successful transmission over a noisy control channel. 
}
\end{abstract}

\begin{IEEEkeywords}
model predictive control, stochastic control, erasure channel, packet dropouts.
\end{IEEEkeywords}

%
\IEEEpeerreviewmaketitle

\section{Introduction}

\IEEEPARstart{T}{he} availability of fast computing machines has spurred the growth of control techniques involving algorithmic selection of actions that minimize some performance objective. Receding horizon predictive control, which is based on the idea of algorithmic selection of control actions, has evolved over the years into one of the most useful control synthesis techniques currently available to a control engineer. Initial forays into stochastic versions of receding horizon techniques were made in the operations research community, with inventory and manufacturing systems as the key application areas. Since then there has been a steady growth of stochastic receding horizon control in the domain of control systems, with current application areas ranging in financial engineering, industrial electronics, power systems, process control, etc. See e.g., the recent surveys \cite{ref:May-14, mesbah_16_survey} for the current state-of-the-art and \cite{ref:Grune-11, ref:rawlings-09} for book-length treatments. 


\secondrevised{
As emphasized in \cite{ref:May-14}, stochastic receding horizon control still lacks a comprehensive and unified treatment. In particular, the treatment of potentially unbounded noise appears to be sketchy, and part of the reason for this lacuna is the complete absence of a final constraint set that can be made positively invariant. Consequently, the conventional tools developed in deterministic and robust receding horizon control, e.g., in \cite{ref:Bemporad-99, ref:Rossiter-98, ref:Kerrigan-04, ref:Maciejowski-09, ref:Marruedo-02, ref:Bayer-13, ref:rakovic-05, ref:rakovic-12, ref:maeder-09, ref:quevedo-04} do not carry over; see \cite[\S3]{ref:May-14} for a detailed discussion.} The literature is even more sketchy when it comes to receding horizon predictive control over networks that take into account the stochastic effects of such communication networks. This is the precise area where the contributions of the present article lie.

This article is concerned with stochastic predictive control under an unreliable control channel. \revised{Numerous contemporary applications involve control of systems over noisy control channels. For instance, cloud-aided vehicle control systems \cite{li2014cloud, li2015h}, where the control values are computed on remote servers and transmitted to the vehicles over wireless channels. Disturbances enter into the plant dynamics due to, for example, GPS localization errors and transmissions over noisy wireless channels. The interference and the fading effects in the noisy control channel lead to packet delays and dropouts. To avoid book-keeping and for sheer simplicity, in this article we consider delayed information as lost information.\footnote{ The problem of packet delays in the control channel will be studied separately, e.g., by building on models such as in \cite{quevedo_jurado_TAC}.}} The plant is assumed to be a noisy linear controlled system, with the plant noise being a stochastic process entering the dynamics additively. We do \emph{not} assume that the noise is bounded. Notice that even without control channel dropouts, since the noise is not bounded, a conventional terminal constraint argument \revised{along the lines of} \cite{ref:mayne-00} to ensure closed loop stability turns out to be impossible to execute. Since the underlying optimal control problems are employed in a receding horizon fashion, and each of them is a finite horizon optimal control problem, stability of the closed loop system under the resulting receding horizon control is by no means obvious. In the deterministic setting, one selects the cost-per-stage function and the final cost function in a certain way (based on Lyapunov inequalities) to ensure stability of the closed-loop system \cite{ref:KeeGil-88}. \revised{Although such a Lyapunov based argument is possible even in the stochastic case \cite{chatterjee-15}, it is not easily numerically tractable}. To deal with stochastic systems, we shall impose a constraint in the underlying optimal control problem such that it is (globally) feasible and the closed-loop system is stable in a precise sense. Such constraint embedding for stability has appeared before in the literature concerning deterministic model predictive control in \cite{ref:mayne-00} and stochastic model predictive control in \cite{ref:ChaRamHokLyg-12, bernardini2010model}. The control channel is modelled as an erasure channel with dropouts occurring according to an i.i.d. Bernoulli random process. We assume that all communication occurs via a protocol with reliable acknowledgements of receipts. One of our primary goals is to ensure good stability properties in closed-loop operation. To this end we select the notion of mean-square boundedness of the closed-loop process as a desirable property, and we demonstrate how to ensure mean-square boundedness under mild assumptions on the noise. To our knowledge this is the first rigorous treatment of stochastic model predictive control with bounded controls in the presence of control channel dropouts, and where stability of closed-loop process is ensured.

\par Control under packet dropouts is extensively studied within the framework of sequence based control \cite{Dolgov2013, Fischer2013, Hekler2012} and packetized predictive control (PPC) \cite{Quevedo2011, Quevedo-12}. In sequence based control, packet dropouts and communication delays are assumed to be present in both \revised{the control channel} and \revised{the sensor channel}, but control bounds are not incorporated in the problem formulations. Hard bounds on control actions are omnipresent in practical applications; therefore it is of paramount importance that control strategies take into account these bounds at the synthesis stage. PPC handles the constrained control problems by solving a deterministic optimization program over open loop input sequences at each time step. It was demonstrated in \cite[\S 2.4]{kumar1986stochastic} that for stochastic systems, the optimization over feedback policies is in general superior to open-loop controls in the sense that the cost in the former case is lower than in the latter case. However, the set of decision variables is non-convex for general state feedback policies \cite{goulart-06}. When using disturbance feedback parametrization, an admissible feedback policy can be obtained by solving a tractable convex optimization program \cite{goulart-06}. Disturbance feedback parametrization followed by saturation of feedback is used in \cite{ref:HokChaRamChaLyg-10} to satisfy the constraints on actions in the presence of potentially unbounded disturbance. This approach was followed in \cite{ref:amin-10} in the presence of i.i.d. packet dropouts, but mean square bounds of the closed-loop states could be proved only under sufficiently large control authority. Our results extend the main results of \cite{ref:HokChaRamChaLyg-10} and \cite{ref:amin-10} by removing the lower bound on the control authority in the presence of i.i.d. packet dropouts. \revised{The recent works \cite{Quevedo2011, Quevedo-12} have considered predictive control under stochastically modelled packet dropouts and additive stochastic noise with unbounded support, but relied on deterministic cost functions for computational tractability. Stochastic receding horizon control with stochastic cost functions accounting for the dropouts and additive noise has appeared only in our recent conference contributions \cite{ref:quevedo-15, prabhat2016} in its initial stage of the development.} In addition, we demonstrated in \cite{ref:quevedo-15} via numerical experiments that our approach employs less actuator energy than PPC \cite{Quevedo2011, Quevedo-12}. 
 
\par The main contributions of this article are as follows: \revised{ 
\begin{itemize}[leftmargin = *]
\item We give tractable and recursively feasible solutions to stochastic predictive control for linear systems in the presence of an unreliable control channel, possibly unbounded additive noise, and hard bounds on the control actions. 
\item We present three transmission protocols depending upon the availability of storage and computation facilities at the actuator, and systematically show the formulation of numerically tractable optimization programs for receding horizon control.   
\item We introduce stability constraints to ensure mean square boundedness of closed-loop states for any positive bound on the control actions and for any non-zero probability of successful transmission over a noisy control channel.
\end{itemize}
}
\par The article exposes as follows: In \secref{s:problem setup} we establish the notation and definitions of the plant and its properties. In \secref{s:main results} we provide the main results and discuss stability issue in 
\secref{s:stability}. We present numerical experiments in \secref{s:simulation} and conclude in \secref{s:conclusion}. The proofs of our results are documented in a consolidated manner in the Appendix.
\par Our notations are standard. We let $\R$ denote the real numbers, \revised{$\R_{\geq 0}$ denote the non-negative reals,} and \(\Nz\) denote the natural numbers. \revised{Intervals on the real line are denoted by $]a,b[$, $]a,b]$, etc., where $]a,b[ \Let \{z \in \R \mid a < z < b \}$ and $]a,b] \Let \{z \in \R \mid a < z \leq b \}$.} For any sequence \((s_n)_{n\in\Nz}\) taking values in some Euclidean space, we denote by \(s_{n:k}\) the vector \(\pmat{s_n\transp & s_{n+1}\transp & \cdots & s_{n+k-1}\transp}\transp\), \(k\in\Nz\). The standard Kronecker and Schur products of matrices are denoted by \(\otimes\) and \(\odot\), respectively. For any \(z\in\R\) we let \(z_+\) and \(z_-\) denote the positive and the negative parts \(\max\{z, 0\}\) and \(\max\{-z, 0\}\) of \(z\), respectively. The notation \(\EE_z[\cdot]\) stands for the conditional expectation with given \(z\). For a given vector \(V\) its $i^{th}$ component is denoted by $V^{(i)}$. Similarly, $M^{(i)}$ denotes the $i^{th}$ row of a given matrix $M$. The column vector of all $1$'s of length $k$ is denoted by $\ones_{1:k}$. For a matrix $M$ the quantity $\sigma_1(M)$ denotes its largest singular value \secondrevised{and $M^+$ its Moore Penrose pseudo inverse \cite[\S 6.1]{bernstein2009matrix}}. \revised{Inner products on Euclidean spaces are denoted by $\inprod{v}{w} \Let v \transp w$.} We let \(I_d\) denote the \(d\times d\) identity matrix. The \( r\times q\) matrix with all entries equal to 0 is denoted by \(0_{r\times q} \) or simply by $0$ when the dimension of the matrix is clear from the context.

	\section{Problem Setup}
	\label{s:problem setup}

	\subsection{Dynamics and objective function}
	\label{s:problem:dynamics and cost}
		Consider a linear time-invariant control system with additive process noise and controlled over \revised{an unreliable} channel given by
		\begin{equation}
		\label{e:system}
			\st_{t+1} = \A \st_t + \B \control^a_t + \wnoise_t,\quad \st_0 = \stinit,
		\end{equation}
		where 
		\begin{enumerate}[label=(\eqref{e:system}-\alph*), leftmargin=*, widest=b]
			\item \(\st_t \in\R^d\), \(A\in\R^{d\times d}, B\in\R^{d\times m}\) are given matrices, \(\stinit\in\R^d\) is a given vector. 
			The control \(\control^a_t\) is the available control at the actuator end at time \(t\) after passing through the erasure control channel. \deleted{The control is constrained to} take values in the set 
				\begin{equation}\label{e:controlset}
				 \controlset\Let\{v\in\R^m\mid \norm{v}_\infty \le U_{\max}\}, \quad \text{ for all } t,
				\end{equation}
				where $U_{\max} > 0$ is a given constraint,
			\item \((\wnoise_t)_{t\in\Nz}\) is a sequence of i.i.d. zero mean and fourth moment bounded \ random vectors, and
			\item at each time \(t\) the state \(\st_t\) is measured perfectly.
		\end{enumerate}
\par	A control policy \(\pi\) is a sequence \((\pi_0, \ldots, \pi_t, \ldots)\) of Borel measurable maps \(\pi_t:\R^d\lra\controlset\). Policies of finite length \((\pi_t, \pi_{t+1}, \ldots, \pi_{t+N-1})\) for some \(N\in\Nz\) will be denoted by \(\pi_{t:N}\) in the sequel.
The control $\control_t^a$ available at the actuator end at time $t$ depends on the transmitted parameters of the control policy and the dropouts \((\cnoise_t)_{t\in\Nz}\) in the control channel. \((\cnoise_t)_{t\in\Nz}\) is a sequence of i.i.d. Bernoulli \(\{0, 1\}\) random variables with $\EE[\cnoise_t] = p \in \;]0,1]$ and the sequence \((\cnoise_t)_{t\in\Nz}\) is independent of \((\wnoise_t)_{t\in\Nz}\). \revised{Therefore, $\cnoise_t = 0$ refers to packet dropout and $\cnoise_t = 1$ refers to successful transmission.}
\begin{figure}[t]		
\centering
\begin{adjustbox}{width=\columnwidth}
\input{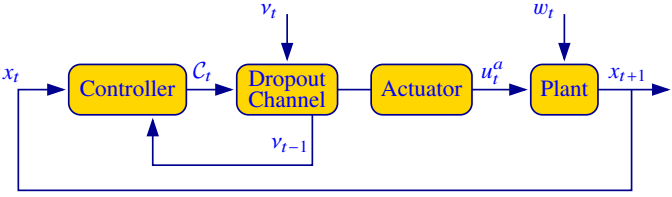}
\end{adjustbox}
\caption{An unreliable communication channel is situated between the controller and the actuator. The parameters of control policy transmitted by the controller depend on the transmission protocols. Acknowledgements of the successful transmission are causally available to the controller.}
 \vspace{-1.3em} 
\label{Fig:blockdia}
\end{figure} 
\par	Let symmetric and non-negative definite matrices \(Q, Q_f\in\R^{d\times d}\) and a symmetric and positive definite matrix \(R\in\R^{m\times m}\) be given, and define the cost-per-stage function $\costps:\R^d\times\controlset\lra \R_{\geq 0} $ and the final cost function \(\costfinal:\R^d\lra \R_{\geq 0} \) by \[ \costps(z, v) \Let \inprod{z}{Qz} + \inprod{v}{Rv} \text{ and } \costfinal(z) = \inprod{z}{Q_f z},\] respectively. Fix an optimization horizon \(N\in\Nz\) and consider the objective function at time \(t\) given the state \(\st_t\):
		\begin{equation}
		\label{e:objfn}
			V_t \Let \EE_{\st_t}\biggl[\sum_{k=0}^{N-1} \costps(\st_{t+k}, \control^a_{t+k}) + \costfinal(\st_{t+N}) \biggr].
		\end{equation}
		The cost function \(V_t\) therefore, considers the control effort that occurs at the actuator end, not just the computed control. At each time instant \(t\) we are interested in minimizing the objective function \(V_t\) over the class of causal history-dependent feedback strategies \(\Pi\) defined by 
		\[
			\control_{t+\ell} = \pi_{t+\ell}(\st_t, \st_{t+1}, \ldots, \st_{t+\ell})
		\]
		while satisfying \(\control_t\in\controlset\) for each \(t\).  

		The \emph{receding horizon control strategy} for a given recalculation interval \(N_r \in\{1, \ldots, N\}\) and time \(t\) consists of successive applications of the following steps:
		\begin{enumerate}[label=(\roman*), leftmargin=*, widest=iii]
			\item measure the state \(\st_t\) and determine an admissible optimal feedback policy \(\pi^\star_{t:N}\in\Pi\) that minimizes \(V_t\) at time \(t\),
			\item apply the first \(N_r \) elements \(\pi^\star_{t:N_r-1}\) of this policy,
			\item increase \(t\) to \(t+ N_r \) and return to step (i).
		\end{enumerate}
		In common parlance, the case of \(N_r = 1\) is known as \emph{model predictive control}, and that of \(N_r = N\) is known as \emph{rolling horizon control} \cite{ref:ChaHokLyg-11, ref:quevedo-03, ref:aboudolas-10, ref:garcial-89}. In this current work we consider the recalculation interval $N_r$ equal to the reachability index $\reachindex$ of the system to satisfy certain drift conditions \eqref{e:drift1} and \eqref{e:drift2} for stability. \revised{Therefore, the optimization horizon $N$ should be not smaller than $\reachindex$.}

		The states, controls and noise over one horizon \(N\) admit the following description under \textsl{unreliable control channel}, for situations where at each time \(t\), solely the  control action \(\control_t \) is sent over the communication channel:
		\begin{equation}
		\label{e:augmented}
			\st_{t:N+1} = \calA \st_t + \calB \control^a_{t:N} + \calD \wnoise_{t:N},
		\end{equation}
		where
		\[
			\control^a_{t:N} \Let \pmat{\cnoise_t\ones_{1:m}\\\cnoise_{t+1}\ones_{1:m}\\\vdots \\\cnoise_{t+N-1}\ones_{1:m}}\odot\pmat{\control_t\\\control_{t+1}\\\vdots\\\control_{t+N-1}} = \pmat{\nu_t \control_t\\ \nu_{t+1} \control_{t+1}\\\vdots\\ \nu_{t+N-1} \control_{t+N-1}},\;
		\]
		\(\calA \Let \pmat{I_d\\ \A\\ \vdots \\ \A^N}\), \(\calB \Let \pmat{0_{d\times m} & 0_{d\times m} & \cdots & 0_{d\times m}\\ \B & 0_{d\times m} & \cdots & 0_{d\times m}\\ \vdots & \vdots & \ddots & \vdots\\ \A^{N-1}\B & \A^{N-2}\B & \dots & \B}\), and \(\calD \Let \pmat{0_{d\times d} & 0_{d\times d} & \cdots & 0_{d\times d}\\ I_d & 0_{d\times d} & \cdots & 0_{d\times d}\\ \vdots & \vdots & \ddots & \vdots\\ \A^{N-1} & \A^{N-2} & \dots & I_{d}}\). \revised{Notice that the expression of $\control_{t:N}^a$ depends on how the stacked control vector $\control_{t:N}$ is affected by the channel effects.} We define two block diagonal matrices 
		\[ \calQ \Let \blkdiag\bigl(\overset{N\text{ times}}{\overbrace{Q, \cdots, Q}}, Q_f\bigr) \text{ and } \calR \Let \blkdiag(\overset{N \text{ times}}{\overbrace{R, \cdots, R}}) \] derived from the given matrices \(Q, Q_f\) and \(R\). 
		The compact notation above allows us to state the following optimal control problem underlying the receding horizon control technique:
		\begin{equation}\label{e:opt problem}
		\begin{aligned}
			& \minimize_{\pi_{t:N}}	&& \EE_{\st_t}\bigl[ \inprod{\st_{t:N+1}}{\calQ \st_{t:N+1}} + \inprod{\control^a_{t:N}}{\calR\control^a_{t:N}} \bigr]\\
			& \sbjto	&& 
				\begin{cases}
					\text{constraint } \eqref{e:augmented},\\
					\control_t\in\controlset\;\text{for all }t,\\
					\pi_{t:N} \in \text{ a class of policies}.
				\end{cases}
		\end{aligned}
		\end{equation}
\secondrevised{The objective function for stochastic systems has the same physical interpretation as for deterministic systems; for stochastic systems we focus on the average cost because of the uncertainty involved in the plant dynamics. This enables us to prevent worst-case analysis that turns out to be conservative. As in standard LQ control, the objective function has quadratic terms -- the cost-per-stage functions and the final cost function. We can choose these terms based on the penalty we want to apply on the states and the controls depending upon the physical context of the problem. Under the receding horizon implementation, the cost-per-stage $\st_i \transp Q \st_i + \control_i \transp R \control_i$ for all $i \geq 0$ shows the cost associated at each time step. Therefore, we are ultimately interested in the empirical average of the cost-per-stage, which is studied in our numerical experiments and displayed in Fig.\ 8 and Fig.\ 13. }
We shall employ a feedback from the process noise in our policies, which will lead to a modified optimal control problem. Parametrization of control policies directly in terms of the noise is standard \cite{goulart-06, ref:lofberg-03} and leads to convex problems under appropriate selection of the cost and the constraints. We impose the following blanket: 		
\begin{assumption}\label{a:blanket} 
{\rm
The communication channel between sensors and the controller is noiseless. Each component of $\wnoise_t$ is symmetrically distributed about origin.
}
\end{assumption}
Two important ingredients of \revised{stochastic model predictive control (SMPC)} are the class of control policy and the stability constraints. We shall discuss them in the following sections. The control policy class allows us to take the disturbance as feedback and to consider the effects of transmission protocols, which are discussed in \secref{s:main results}. The stability constraints allow us to transcend beyond the regime of terminal set methods. In order to ensure recursive feasibility and mean square boundedness \emph{for any positive bound on control}, the stability constraints presented in this article are different from those in \cite{ref:HokChaRamChaLyg-10}.
	\subsection{Control policy class}\label{s:problem:policy}
		We recall that the states are completely and exactly measured and acknowledgements of whether a successful control transmission has occurred or not are assumed to be causally available to the controller. The corresponding architecture is depicted in Fig. \ref{Fig:blockdia}. It is therefore possible to reconstruct the noise sequence from the sequence of observed states and control inputs with the aid of the formula
		\begin{equation}\label{e:estimation}
			\wnoise_t = \st_{t+1} - \A \st_t - \B \control^a_t,\quad t\in\Nz.
	    \end{equation}
This calculation is performed at the controller at each time \( t \in \Nz\).	    
		We follow the approach in \cite{ref:ChaHokLyg-11}, and employ \(N\)-history-dependent policies of the form
		\begin{equation}\label{e:controlt}
		\begin{aligned}
			\control_{t+\ell} &= \eta_{t+\ell} + \sum_{i=0}^{\ell-1} \theta_{\ell,t+i} \ee_{i+1}(\wnoise_{t+i}),\\ 
			                  & \teL \eta_{t+\ell} + \check{\control}_{t+\ell}, 
		\end{aligned}
		\end{equation}
		for \(\ell =0,1, \ldots, N-1\), \revised{where \(\ee_i:\R^d\lra\controlset\) is a measurable map for each \(i\) such that \(\EE[\ee_{i}(\wnoise_{t+i-1})] = 0\).}
It was shown in \cite{goulart-06} that if $\ee_i$ equals the identity map, then
there exists a (nonlinear) bijection between the above class of control
policies and the class of affine state feedback policies. Restricting control to be of the form \eqref{e:controlt} is suboptimal, but it ensures tractability of a broad
class of optimal control problems  in a computationally efficient way using convex optimization techniques \cite{cinquemani-11}.		
		 Similarly to \cite{ref:Hokayem-12, ref:HokChaRamChaLyg-10, hokayem2009stochastic, ref:ChaHokLyg-11}, the measured noise is saturated before inserting into the control vector. Hence, we can assume that there exists $\varphi_{\max} \in \R$ such that $\norm{\ee_{i}(\wnoise_{t+i-1})}_{\infty} \leq \varphi_{\max}$ for all $i=1,\cdots,N-1$. If there are no dropouts, then the control vector \(\control_{t:N}\) admits the following form under the reliable control channel:
		\begin{equation}\label{e:policy}
		\begin{aligned}
			\control_{t:N}	& = \pmat{\eta_t\\ \eta_{t+1}\\\vdots\\ \eta_{t+N-1}} + \varphi(\wnoise_{t:N-1})\Let \offset_t + \gain_t \pmat{\ee_1(\wnoise_t)\\\ee_2(\wnoise_{t+1})\\\vdots\\\ee_{N-1}(\wnoise_{t+N-2})}\\
			& \teL \offset_t + \gain_t \ee(\wnoise_{t:N-1}), 
		\end{aligned}
		\end{equation}
		where \(\offset_t\in\R^{m N}\) and \(\gain_t\) is a strictly lower block triangular matrix
		\begin{equation} \label{e:gain}
			\gain_t = \pmat{0 & 0 & \cdots & 0 & 0\\ \theta_{1, t} & 0 & \cdots & 0 & 0\\ \theta_{2, t} & \theta_{2, t+1}  & \cdots & 0 & 0\\ \vdots & \vdots & \vdots & \vdots & \vdots\\ \theta_{N-1, t} & \theta_{N-1, t+1} & \cdots & \theta_{N-1, t+N-3} & \theta_{N-1, t+N-2}},
		\end{equation}
		with each \(\theta_{k, \ell} \in \R^{m\times d}\) and $\norm{\ee(\wnoise_{t:N-1})}_{\infty} \leq \varphi_{\max}$. \revised{We shall employ saturation functions symmetric about the origin.} The matter of selection of the functions \(\ee_i\) is left open. As such we can accommodate standard saturation, piecewise linear, and sigmoidal functions, to name a few. Further discussion on $\ee_i$ as a basis element and $\theta_{\ell,t+i}$ as a Fourier coefficient can be found in \cite[\S III]{ref:ChaHokLyg-11}.
		
\section{Main results}
	\label{s:main results}		
		\revised{In this section} we propose three transmission protocols in the setting of control channel dropouts. \revised{ Each protocol leads to a different controller. We present associated constrained finite horizon optimal control problems \eqref{e:opt problem} in the form of tractable and recursively feasible optimization programs under these transmission protocols, and demonstrate that the optimization programs thus obtained are convex quadratic, and can be solved by using standard MATLAB-based software packages, e.g., YALMIP \cite{ref:lofberg-04} and the solver SDPT3-4.0 \cite{ref:toh-06}. The variance and covariance matrices used in the optimization programs are computed offline empirically to reduce the burden of online computation. Detailed examples are documented in \secref{s:simulation}. } 

\subsection{Sequential transmission of control} 
				\begin{assumptiontp}\mbox{}
		\begin{enumerate}[label={\rm (TP\arabic*)}, leftmargin=*, widest=3, align=left, start=1]
		{\rm
			\item \label{a:seq} The control values are transmitted to the actuator at every step.}
		\end{enumerate}
		\end{assumptiontp}
		For each $t=0,\reachindex,2\reachindex,\cdots$, the offset vector and gain matrix are obtained by solving the optimization problem \eqref{e:seq opt control problem} discussed below. \deleted{Akin to \eqref{e:augmented},} the control \eqref{e:controlt} is computed and transmitted at each instant. 
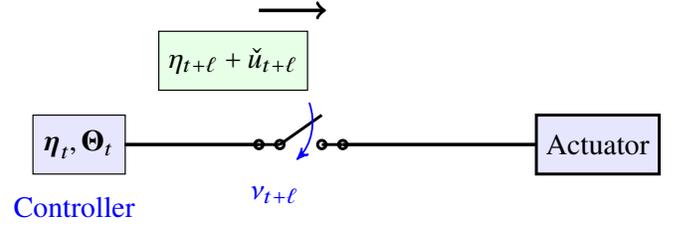
\begin{figure}[t]
\begin{adjustbox}{width = \columnwidth}
\begin{tikzpicture}
	\tikzstyle{block} = [draw, fill=blue!10, rectangle, 
	    minimum height=2em, minimum width=1.1cm]
\tikzstyle{blockgreenl} = [draw, fill=green!10, rectangle, 
	    minimum height=2em, minimum width=1cm]
	\node[coordinate] (0) at (0,0) {};
\node[anchor=north, blue] at (0.5,-0.5) {Controller};
	\node [block, right=0cm of 0] (controller) {$\offset_t,\gain_t$};
\draw[very thick ,-] (1.1,0) -- (2.7,0);
\node[coordinate] (1) at (0,1) {};
\draw[very thick ,->] (2.7,1.6) -- (3.5,1.6);
\node [blockgreenl, right=1.5cm of 1] (packet) {$\eta_{t+\ell} + \check{\control}_{t+\ell} $};
\draw[decorate, decoration=switch] (2.7cm,0cm) -- ++(1cm,0cm);
\draw[very thick ,-] (3.7,0) -- (6,0);
\node [block, right=6cm of 0] (actuator) {Actuator};
\draw[->,>=stealth',semithick, blue] (3.3,0.5) arc[radius=0.65, start angle=20, end angle=-45];
\node[anchor=east, blue] at (3.3,-0.6) {$\cnoise_{t+\ell}$};		
	\end{tikzpicture}	
	
\end{adjustbox}
\caption{Sequential transmission at time $t+\ell$}
\vspace{-1.5em}
\label{fig:policySequential}
\end{figure}				
For $\ell = 0,1, \cdots, \reachindex -1$, the control value transmitted at time $t+\ell$ is affected by dropout at the same time. Hence, $\control^a_{t:N}$ in \eqref{e:augmented} is given by   
		\begin{equation}
		\label{e:policyseq}
			\control^a_{t:N} \Let \calS\offset_t + \calS\gain_t\ee(\wnoise_{t:N-1}), \\
		\end{equation}		
		where \[ \calS \Let \pmat{I_{m}\otimes\cnoise_{t} & & \\ & \ddots & \\ & & I_{m}\otimes \cnoise_{t + \reachindex -1}\\ & & &  I_{m(N-\reachindex)} } \] and $\ee(\wnoise_{t:N-1})$ as defined in \eqref{e:policy}. \revised{In each optimization horizon of length $N$, only $\reachindex \leq N$ blocks of the stacked control vector are utilized. The structure of the matrix $\calS$, therefore, reflects no dropouts in the last $N-\reachindex$ blocks of the stacked control vector.}  
\begin{example}		
{\rm
		As an example consider $N=3,\reachindex=2$. According to this protocol, the following control sequence is \deleted{valid}:
		\begin{align*}
			\control_t &= \cnoise_t\eta_t\\
			\control_{t+1} &=\cnoise_{t+1}\eta_{t+1} + \cnoise_{t+1}\theta_{1,t}\ee_1(\wnoise_t)\\
			\control_{t+2}&=\eta_{t+2} + \theta_{2,t}\ee_1(\wnoise_t) + \theta_{2,t+1}\ee_2(\wnoise_{t+1}).
		\end{align*}		 
}
\end{example}	
		Correspondingly, we have the following optimal control problem in lieu of \eqref{e:opt problem}:
		\begin{equation}
		\label{e:seq opt control problem}
		\begin{aligned}
			& \minimize_{\offset_t, \gain_t}	&& \EE_{\st_t}\bigl[ \inprod{\st_{t:N+1}}{\calQ \st_{t:N+1}} + \inprod{\control^a_{t:N}}{\calR \control^a_{t:N}} \bigr]\\
			& \sbjto	&& \begin{cases}
				\text{constraint }\eqref{e:augmented},\\
				\text{control }\eqref{e:policyseq},\\
				\control_t\in\controlset\text{ for all }t, 
			\end{cases}
		\end{aligned}
		\end{equation}
In the above transmission protocol, if the control packet is lost at some time instant, null control is applied to the plant. The above protocol does not require any storage or computational facility at the actuator end. 
\begin{remark}
{\rm
The bit rate in transmission protocol \ref{a:seq} can be reduced by storing the received packets $\ee_{i+1}(\wnoise_{t+i})$ in a buffer near the actuator and the corresponding scaling factors $\theta_{\ell,t+i}$'s should be transmitted through the channel at each step.
}
\end{remark}
\par For notational convenience, we introduce the following definitions of covariance matrices: $\Sigma_e \Let \EE[\ee(\wnoise_{t:N-1})\ee(\wnoise_{t:N-1})\transp], \Sigma_e^{\prime} \Let \EE[\wnoise_{t:N} \ee(\wnoise_{t:N-1})\transp]$, $\Sigma_W \Let \EE[\wnoise_{t:N}\wnoise_{t:N}\transp], \mu_{\calS}=\EE[\calS]$,$ \Sigma_{\calS} = \EE[\calS\transp(\calB\transp\calQ\calB+\calR)\calS] $ and $c_t \Let \st_t\transp\calA\transp\calQ\calA\st_t + \trace(\calD\transp\calQ\calD\Sigma_W)$.
We have the following theorem:
\begin{theorem}
		\label{th:seq}
			Consider the control system \eqref{e:system} and suppose that Assumption \ref{a:blanket} holds under the transmission protocol \ref{a:seq}. Then
				 for every $t=0, \reachindex, 2\reachindex,\ldots$, the optimization problem \eqref{e:seq opt control problem} is convex, feasible, and can be rewritten as the following tractable program with tightened constraints:
\begin{align}
& \minimize_{(\offset_t, \gain_t)}\;	 \;  2\trace(\gain_t\transp\mu_{\calS}\transp\calB\transp\calQ\calD\Sigma_{\ee}^{\prime}) + 2\st_t\transp\calA\transp\calQ\calB\mu_{\calS}\offset_t \nonumber \\
 & \quad \; + \trace(\offset\transp\Sigma_{\calS}\offset_t)+
\trace((\gain_t\transp\Sigma_{\calS}\gain_t\Sigma_{\ee}) + c_t \label{e:programsingle} \\
& \sbjto:\;
 \gain_t \text{ having the structure in \eqref{e:gain}} \label{e:gainStructsingle} \\
& \quad \; \abs{\offset_t^{(i)}} + \norm{\gain_t^{(i)}}_1\varphi_{\max} \leq U_{\max} \quad \text{for all } i = 1,\ldots,Nm. \label{e:decisionboundsingle}
\end{align}
		\end{theorem}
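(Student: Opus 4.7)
}

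The plan is to (i) plug the noise-feedback parametrisation \eqref{e:policyseq} into the augmented dynamics \eqref{e:augmented}, (ii) compute the expectation in the objective of \eqref{e:seq opt control problem} in closed form using Assumption \ref{a:blanket} and the independence of $(\wnoise_t)$ and $(\cnoise_t)$, (iii) tighten the almost-sure constraint $\control_t \in \controlset$ into a deterministic constraint that is linear in the decision variables, and (iv) verify convexity and feasibility of the resulting program.

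First I would substitute $\st_{t:N+1} = \calA\st_t + \calB\control^a_{t:N} + \calD\wnoise_{t:N}$ and $\control^a_{t:N} = \calS\offset_t + \calS\gain_t\ee(\wnoise_{t:N-1})$ into the integrand, expand the two quadratic forms, and collect them so that the terms involving $\control^a$ combine into the single quadratic $\inprod{\control^a_{t:N}}{(\calB\transp\calQ\calB + \calR)\control^a_{t:N}}$, together with the cross term $2\st_t\transp\calA\transp\calQ\calB\control^a_{t:N}$, a cross term $2(\control^a_{t:N})\transp\calB\transp\calQ\calD\wnoise_{t:N}$ and the $\wnoise$-only term $\wnoise_{t:N}\transp\calD\transp\calQ\calD\wnoise_{t:N}$ plus the constant $\st_t\transp\calA\transp\calQ\calA\st_t$. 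Taking $\EE_{\st_t}$ and using $\EE[\wnoise_{t:N}] = 0$, $\EE[\ee(\wnoise_{t:N-1})] = 0$ (componentwise, from Assumption \ref{a:blanket} together with the symmetry assumption on the saturation), and the independence of $\calS$ from $(\wnoise_{t:N}, \ee)$, the four non-constant terms collapse into exactly the four summands appearing in \eqref{e:programsingle}: the $\offset_t$-quadratic $\offset_t\transp\Sigma_{\calS}\offset_t$ and the $\gain_t$-quadratic $\trace(\gain_t\transp\Sigma_{\calS}\gain_t\Sigma_{\ee})$ both drop out from $\EE[(\control^a)\transp(\calB\transp\calQ\calB+\calR)\control^a]$ after using the vanishing mean of $\ee$; the cross term $2\trace(\gain_t\transp\mu_{\calS}\transp\calB\transp\calQ\calD\Sigma_{\ee}')$ comes from $\EE[(\control^a)\transp\calB\transp\calQ\calD\wnoise_{t:N}]$; and the linear-in-$\offset_t$ term $2\st_t\transp\calA\transp\calQ\calB\mu_{\calS}\offset_t$ is immediate. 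The $\wnoise$-only term and the state term merge into $c_t$.

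For the constraint, I would note that under \eqref{e:policy} the $i$-th row of $\control_{t:N}$ equals $\offset_t^{(i)} + \gain_t^{(i)}\ee(\wnoise_{t:N-1})$, and since $\|\ee(\wnoise_{t:N-1})\|_\infty \le \varphi_{\max}$ almost surely,
\[
\abs{\offset_t^{(i)} + \gain_t^{(i)}\ee(\wnoise_{t:N-1})} \le \abs{\offset_t^{(i)}} + \norm{\gain_t^{(i)}}_1 \varphi_{\max}
\]
almost surely by H\"older's inequality; requiring the right-hand side to be bounded by $U_{\max}$ for every $i=1,\dots,Nm$ gives \eqref{e:decisionboundsingle} and implies $\control_t \in \controlset$ almost surely for every $t$. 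The structural restriction \eqref{e:gainStructsingle} is inherited directly from \eqref{e:gain}.

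Convexity then follows because $\Sigma_{\calS}$ is positive semidefinite by construction (it is the expectation of a positive semidefinite matrix, since $\calB\transp\calQ\calB+\calR \succ 0$ and $\calS$ is real), $\Sigma_{\ee}$ is positive semidefinite as a covariance, and $\trace(G\transp\Sigma_{\calS}G\Sigma_{\ee})$ is a convex quadratic form in $G$; the linear terms are trivially convex and \eqref{e:decisionboundsingle} is a polyhedral constraint. Feasibility is immediate from $(\offset_t,\gain_t) = (0,0)$, which satisfies \eqref{e:gainStructsingle} and \eqref{e:decisionboundsingle} regardless of $U_{\max} > 0$.

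The main obstacle is bookkeeping the expectations involving $\calS$: one has to be careful that $\calS$ acts only on the first $\reachindex m$ block rows of $\control_{t:N}$ (and the identity block on the remaining tail), and that $\calS$ is independent of $\wnoise_{t:N}$ and $\ee(\wnoise_{t:N-1})$, so that each cross moment factors cleanly; keeping track of this block structure while combining the four quadratic contributions into the compact form using $\mu_{\calS}$, $\Sigma_{\calS}$, $\Sigma_{\ee}$ and $\Sigma_{\ee}'$ is where the bulk of the arithmetic sits.
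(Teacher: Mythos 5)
Your proposal is correct and follows essentially the same route as the paper: the paper's Lemma~\ref{t:objectivesingle} performs exactly the substitution-and-expansion of the objective you describe (using zero mean of $\wnoise_{t:N}$ and of $\ee(\wnoise_{t:N-1})$ plus independence of $\calS$ to collapse the cross terms into the $\mu_{\calS}$, $\Sigma_{\calS}$, $\Sigma_{\ee}$, $\Sigma_{\ee}'$ form), and the proof of Theorem~\ref{th:seq} handles the input constraint by the same maximization of $\abs{\offset_t^{(i)} + \gain_t^{(i)}\ee(\wnoise_{t:N-1})}$ over $\norm{\ee(\wnoise_{t:N-1})}_{\infty}\leq\varphi_{\max}$ yielding \eqref{e:decisionboundsingle}. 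Your explicit feasibility check via $(\offset_t,\gain_t)=(0,0)$ and the positive-semidefiniteness argument for convexity are harmless minor variants of the paper's reasoning.
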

\revised{In \cite{ref:amin-10} the authors propose burst transmission of $\reachindex$ control values every $\reachindex$ steps. In contrast, we need a modification of their protocol because the class of control policy considered in this article has feedback terms; these terms can not be transmitted in a burst since they can not be computed at the beginning of the recalculation interval.} 
	\subsection{Burst transmission of control policy parameters}
				\begin{assumptiontp}\mbox{}
		\begin{enumerate}[label={\rm (TP\arabic*)}, leftmargin=*, widest=3, align=left, start=2]
		{\rm
			\item \label{a:burst} The offset values of the control policy are transmitted in a single burst every $\reachindex$ steps to the actuator. In addition, the weighted sum of the saturated feedback terms is transmitted at each step.}
		\end{enumerate}
		\end{assumptiontp}
According to this protocol, the burst $(\offset_t)_{1:m\reachindex}$ is transmitted at time $t$ and stored in a buffer at the actuator end, where $(\offset_t)_{1:m\reachindex}$ denotes first $m\reachindex$ rows of $\offset_t$. In addition, for $\ell=1,2,\cdots,\reachindex-1$, 
$\check{\control}_{t+\ell}$ is transmitted at time $t+\ell$. The plant noise $\wnoise_{t+i}$ is calculated at the controller by \eqref{e:estimation} correctly with the help of acknowledgements. Hence, the burst $(\offset_t)_{1:m\reachindex}$ will be affected by the packet dropout occurring at time $t$, and $\check{\control}_{t+\ell}$ 
is affected by the dropouts occurring at times $t+\ell$ for $\ell=1,2,\cdots,\reachindex-1$.

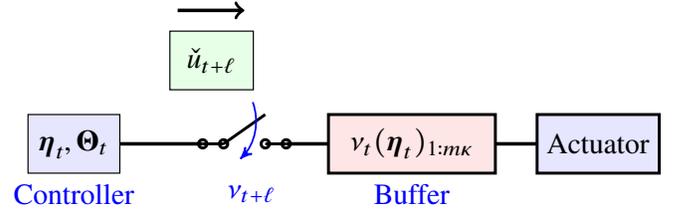
\begin{figure}[t]
\begin{adjustbox}{width = \columnwidth}
\begin{tikzpicture}
	\tikzstyle{block} = [draw, fill=blue!10, rectangle, 
	    minimum height=2em, minimum width=1.1cm]
\tikzstyle{blockgreenl} = [draw, fill=green!10, rectangle, 
	    minimum height=2em, minimum width=1cm]
	 \tikzstyle{blockgreen} = [draw, fill=green!10, rectangle, 
	    minimum height=2em, minimum width=2.7cm] 
	    \tikzstyle{blockred} = [draw, fill=red!10, rectangle, 
	    minimum height=2em, minimum width=2cm]  
	\node[coordinate] (0) at (0,0) {};
	\node [block, right=-0.1cm of 0] (controller) {$\offset_t,\gain_t$};
	\node[anchor=north, blue] at (controller.south) {Controller};
\node[coordinate] (1) at (0,1) {};
\draw[very thick ,-] (controller.east) -- (2,0);
\draw[very thick ,->] (1.7,1.6) -- (2.5,1.6);
\node [blockgreenl, right=1.6cm of 1] (packet) {$\check{\control}_{t+\ell}$};
\draw[decorate, decoration=switch] (2cm,0cm) -- ++(1cm,0cm);
\node [blockred, right=3.5cm of 0] (buffer) {$\cnoise_t(\offset_t)_{1:m\reachindex}$};
\node[anchor=north, blue] at (buffer.south) {Buffer};	
\draw[very thick ,-] (3,0) -- (buffer.west);
\draw[->,>=stealth',semithick, blue] (2.6,0.5) arc[radius=0.65, start angle=20, end angle=-45];
\node[anchor=east, blue] at (3,-0.6) {$\cnoise_{t+\ell}$};	
\node [block, right= 6 cm of 0] (actuator) {Actuator};
	\draw[very thick ,-] (buffer.east) -- (actuator.west);	
	\end{tikzpicture}	
	
\end{adjustbox}
\caption{Control channel and buffer at time $t+\ell$ for burst transmission: The red block is transmitted as a burst at time t and stored in the buffer. The green blocks are transmitted at time $t+\ell$ for $\ell=1,\cdots,\reachindex-1$ and added with corresponding stored offset value before application to the plant.}
\vspace{-1.5em}
\label{fig:policyburst}
\end{figure}
		 The control sequence at the actuator can therefore be represented in compact form as:   
        \begin{equation}\label{e:policyburst}
			\control^a_{t:N} \Let \mathcal{K}\offset_t + \calS\gain_t\ee(\wnoise_{t:N-1}), \\
		\end{equation}
		where $\calK \Let \pmat{\cnoise_t \otimes I_{m\reachindex} & 0_{m\reachindex\times m(N-\reachindex)} \\ 0_{m(N-\reachindex)\times m\reachindex} & I_{m(N-\reachindex)} }$ and $\gain_t$ and $\calS$ are given in \eqref{e:gain} and \eqref{e:policyseq},  respectively; and $\ee(\wnoise_{t:N-1})$ is as defined in \eqref{e:policy}.
\begin{example}	
{\rm	
		As an example consider $N=3,\reachindex=2$. According to this protocol, the following control sequence is valid:
		\begin{align*}
			{\control}_t &= \cnoise_t\eta_t\\
			{\control}_{t+1} &=\cnoise_t\eta_{t+1} + \cnoise_{t+1}\theta_{1,t}\ee_1(\wnoise_t)\\
			{\control}_{t+2}&=\eta_{t+2} + \theta_{2,t}\ee_1(\wnoise_t) + \theta_{2,t+1}\ee_2(\wnoise_{t+1}).
		\end{align*}
		}
\end{example}
		Correspondingly, we have the following optimal control problem in lieu of \eqref{e:opt problem}:
		\begin{equation}
		\label{e:burst opt control problem}
		\begin{aligned}
			& \minimize_{\offset_t, \gain_t}	&& \EE_{\st_t}\bigl[ \inprod{\st_{t:N+1}}{\calQ \st_{t:N+1}} + \inprod{\control^a_{t:N}}{\calR \control^a_{t:N}} \bigr]\\
			& \sbjto	&& \begin{cases}
				\text{constraint }\eqref{e:augmented},\\
				\text{control }\eqref{e:policyburst},\\
				\control_t\in\controlset\text{ for all }t. 
			\end{cases}
		\end{aligned}
		\end{equation}
The above optimization problem has decision variables $\offset_t$ and $\gain_t$ rather than a class of policy as in \eqref{e:opt problem}. The above transmission protocol needs a buffer and an adder at the actuator end. 

Let us define $\mu_{\calK} \Let \EE[\calK]$, $\Sigma_{\calK} \Let \EE[\calK\transp(\calB\transp\calQ\calB+\calR)\calK].$ Now, we have the following theorem:
\begin{theorem}\label{th:burst}
		Consider the system \eqref{e:system} and suppose that Assumption \ref{a:blanket} holds under the transmission protocol \ref{a:burst}. Then
			for every $t=0, \reachindex, 2\reachindex,\ldots$, the optimization problem \eqref{e:burst opt control problem} is convex, feasible, and can be rewritten as the following tractable program with tightened constraints:
\begin{align}\label{e:program}
			&\minimize_{\offset_t, \gain_t} \; \; \; \; \;   c_t + 2\trace(\gain_t\transp\mu_{\calS}\transp\calB\transp\calQ\calD\Sigma_{\ee}^{\prime}) + 2\st_t\transp\calA\transp\calQ\calB\mu_{\calK}\offset_t \nonumber \\ 
 & \quad \quad \quad \quad \quad + \trace(\offset_t\transp\Sigma_{\calK}\offset_t)+
\trace(\gain_t\transp\Sigma_{\calS}\gain_t\Sigma_{\ee})\\ 
			&\sbjto: \; \;  \text{Constraints } \eqref{e:gainStructsingle}, \; \eqref{e:decisionboundsingle}.\nn
\end{align}
		\end{theorem}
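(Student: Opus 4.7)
The plan is to mirror the proof of Theorem \ref{th:seq}, substituting the burst policy \eqref{e:policyburst} for the sequential policy \eqref{e:policyseq}. First I would plug \(\control^a_{t:N}=\calK\offset_t+\calS\gain_t\ee(\wnoise_{t:N-1})\) into the stacked dynamics \eqref{e:augmented} so that \(\st_{t:N+1}\) becomes an affine function of the decision variables \(\offset_t,\gain_t\), the current state \(\st_t\), the disturbance \(\wnoise_{t:N}\) and the saturated feedback \(\ee(\wnoise_{t:N-1})\). Expanding the quadratic objective \(\EE_{\st_t}[\inprod{\st_{t:N+1}}{\calQ\st_{t:N+1}}+\inprod{\control^a_{t:N}}{\calR\control^a_{t:N}}]\) then produces a second-order polynomial in \((\offset_t,\gain_t)\) whose coefficients are expectations of products of the random matrices \(\calK,\calS\) and the noise.

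Next I would prune these expectations using three facts: (i) the dropout sequence \((\cnoise_s)\) is independent of the noise sequence \((\wnoise_s)\), so \(\calK\) and \(\calS\) are independent of \(\ee(\wnoise_{t:N-1})\) and of \(\wnoise_{t:N}\); (ii) \(\EE[\wnoise_{t+i}]=0\) and, by construction, \(\EE[\ee(\wnoise_{t:N-1})]=0\); and (iii) the symmetry in Assumption \ref{a:blanket} eliminates the residual odd cross moments between \(\wnoise\) and \(\ee(\wnoise)\). The terms depending only on \(\offset_t\) collapse, via \(\mu_{\calK}\) and \(\Sigma_{\calK}\Let\EE[\calK\transp(\calB\transp\calQ\calB+\calR)\calK]\), to \(2\st_t\transp\calA\transp\calQ\calB\mu_{\calK}\offset_t+\trace(\offset_t\transp\Sigma_{\calK}\offset_t)\). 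The terms depending only on \(\gain_t\) reduce, via the trace identity \(\EE[\ee\transp M\ee]=\trace(M\Sigma_{\ee})\) and the cross-covariance \(\Sigma_{\ee}^{\prime}\), to \(2\trace(\gain_t\transp\mu_{\calS}\transp\calB\transp\calQ\calD\Sigma_{\ee}^{\prime})+\trace(\gain_t\transp\Sigma_{\calS}\gain_t\Sigma_{\ee})\), while the remaining deterministic pieces assemble into \(c_t\), yielding \eqref{e:program}.

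For the hard bound \(\control_{t+\ell}\in\controlset\), the constraint is imposed on the designed control \(\control_{t:N}=\offset_t+\gain_t\ee(\wnoise_{t:N-1})\) on every realisation of the noise; applying H\"older's inequality together with \(\norm{\ee(\wnoise_{t:N-1})}_\infty\le\varphi_{\max}\) gives the sufficient row-wise tightening \(\abs{\offset_t^{(i)}}+\norm{\gain_t^{(i)}}_1\varphi_{\max}\le U_{\max}\), which is \eqref{e:decisionboundsingle}. Convexity of \eqref{e:program} is then immediate: \(\Sigma_{\calK}\), \(\Sigma_{\calS}\) and \(\Sigma_{\ee}\) are second-moment matrices and hence positive semidefinite, so both quadratic forms are convex, and the constraints \eqref{e:gainStructsingle}, \eqref{e:decisionboundsingle} are polyhedral. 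Feasibility holds trivially by taking \((\offset_t,\gain_t)=(0,0)\), since \(U_{\max}>0\). The main obstacle I anticipate is the bookkeeping in step two: because \(\calK\) and \(\calS\) share the factor \(\cnoise_t\) they are \emph{not} independent, so a cross term such as \(\EE[\offset_t\transp\calK\transp\calB\transp\calQ\calB\calS\gain_t\ee]\) cannot be killed by factoring across \(\calK\) and \(\calS\); it has to be killed by jointly factoring the pair \((\calK,\calS)\) away from \(\ee\) through independence of \((\cnoise_s)\) from \((\wnoise_s)\) and then invoking \(\EE[\ee]=0\).
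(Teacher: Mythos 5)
Your proposal is correct and follows essentially the same route as the paper: substitute the burst policy \eqref{e:policyburst} into \eqref{e:augmented}, expand the quadratic objective, kill the cross terms via independence of \((\cnoise_s)\) from \((\wnoise_s)\) together with \(\EE[\wnoise_{t:N}]=0\) and \(\EE[\ee(\wnoise_{t:N-1})]=0\), collect the surviving terms into \(\mu_{\calK},\Sigma_{\calK},\mu_{\calS},\Sigma_{\calS},\Sigma_{\ee},\Sigma_{\ee}^{\prime}\), and obtain \eqref{e:decisionboundsingle} by maximizing \(\abs{\offset_t^{(i)}+\gain_t^{(i)}\ee}\) over the box \(\norm{\ee}_\infty\le\varphi_{\max}\) — which is exactly the paper's Lemma on the objective for \ref{a:burst} combined with the constraint argument from Theorem \ref{th:seq}. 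Your observation that \(\calK\) and \(\calS\) share the factor \(\cnoise_t\) and so the offset--gain cross term must be eliminated by factoring the pair \((\calK,\calS)\) jointly away from \(\ee\) and invoking \(\EE[\ee]=0\) is the right (and the paper's implicit) resolution of that bookkeeping point.
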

The transmission protocols \ref{a:seq} and \ref{a:burst} do not transmit the data repetitively. \deleted{It is quite natural to retransmit the data that may be useful in future if it has not been received at the actuator.} The next transmission protocol is motivated by the idea of repetitive transmissions \cite{Quevedo-12}.
\subsection{Sequential transmission of control along with repetitive transmission of remaining offset values}
				\begin{assumptiontp}\mbox{}
		\begin{enumerate}[label={\rm (TP\arabic*)}, leftmargin=*, widest=3, align=left, start=3]
		{\rm
			\item \label{a:repetitive} The remaining blocks of the offset vector $\offset_t$ are transmitted at each step until the first successful transmission. The control values $\control_{t+\ell}$ are transmitted at each instant.
}		
		\end{enumerate}
		\end{assumptiontp}
		 
According to this protocol, control values are transmitted at each instant, similar to the transmission protocol \ref{a:seq}. To mitigate the effects of dropouts, the components of the burst $(\offset_t)_{1:m\reachindex}$ which are useful at future instants are also transmitted repetitively until one burst is successfully received at the actuator end. A successfully received burst is stored in a buffer and the corresponding offset block is used in case of packet dropout. The buffer is emptied after each recalculation interval. For $\ell = 0,1, \cdots, \reachindex -2$, the control value along with the burst of the remaining offset components transmitted at time $t+\ell$ is affected by dropout as indicated by $\cnoise_{t+\ell}$. At time $t+\reachindex -1$, the transmitted control value is affected by the dropout $\cnoise_{t+\reachindex-1}$. The plant noise $\wnoise_{t+i}$ is calculated at the controller by \eqref{e:estimation} correctly with the help of acknowledgements.

		 The control sequence at the actuator, when \ref{a:repetitive} is adopted, can therefore be represented in compact form as:   
        \begin{equation}\label{e:policyrepetitive}
			\control^a_{t:N} \Let \mathcal{G}\offset_t + \calS\gain_t\ee(\wnoise_{t:N-1}), \\
		\end{equation}
		where \[ \calG \Let \pmat{\cnoise_t \otimes I_{m} & 0_{m\times m(N-1)} \\ 0_{m\times m} & \left( \cnoise_{t+1}+(1-\cnoise_{t+1})\cnoise_{t} \right) \otimes I_{m} & 0_{m \times m(N-2)} \\ \vdots & \vdots \\ 
0_{m(N-\reachindex)\times m\reachindex} & I_{m(N-\reachindex)}		
		} \] 
		and $\gain_t$ and $\calS$ are as given in \eqref{e:gain} and \eqref{e:policyseq},  respectively; and $\ee(\wnoise_{t:N-1})$ is as defined in \eqref{e:policy}. \revised{ Notice that the matrix $\calG$ has $(N \times (N-1))$ blocks in total, each of dimension $m \times m$. For $i = 1,\cdots,N$ and $j = 1,\cdots,N-1$, the matrix $\calG$ can be given in terms of the blocks $\calG_b(i,j)$ of each dimension $m \times m$ as follows: }
 
	\begin{align}
		\calG_b(i,j) \Let
\begin{cases}
     \rho_{t+i-1} I_m & \text{ if } i=j \leq \reachindex ,\\ 
	I_{m} & \text{ if } i =j > \reachindex ,\\
	0_m \quad  & \text{ otherwise,}			
\end{cases}		
		\end{align}
where $\rho_t = \cnoise_t$ and $\rho_{t+\ell} = \rho_{t+\ell-1} + \left( \prod_{s = 0}^{\ell -1} (1-\cnoise_{t+s}) \right) \cnoise_{t+\ell}  $. 		
\begin{figure}[t!]
\begin{adjustbox}{width = \columnwidth}
\begin{tikzpicture}
	\tikzstyle{block} = [draw, fill=blue!10, rectangle, 
	    minimum height=2em, minimum width=1.1cm]
\tikzstyle{blockgreenl} = [draw, fill=green!10, rectangle, 
	    minimum height=2em, minimum width=1cm]
	 \tikzstyle{blockgreen} = [draw, fill=green!10, rectangle, 
	    minimum height=2em, minimum width=2.7cm] 
	    \tikzstyle{blockred} = [draw, fill=red!10, rectangle, 
	    minimum height=2em, minimum width=2cm]  
	    \begin{scope}
		\node[coordinate] (0) at (0,0) {};
	\node [block, right=-0.1cm of 0] (controller) {$\offset_t,\gain_t$};
	\node[anchor=north, blue] at (controller.south) {Controller};
\draw[very thick ,-] (controller.east) -- (2,0);
\node[coordinate] (1) at (0,1) {};
\draw[very thick ,->] (1.7,2.2) -- (2.5,2.2);
\node [blockgreenl, right=1.6cm of 1] (packet) {$\eta_{t+\ell} + \check{\control}_{t+\ell}$};
\node [blockred, above=0cm of packet] (packet) {$(\offset_t)_{(\ell-1)\reachindex + 1 : m\reachindex}$};
\draw[decorate, decoration=switch] (2cm,0cm) -- ++(1cm,0cm);	
	\node [block, right=3.5cm of 0] (buffer) {empty};
\draw[very thick ,-] (3,0) -- (buffer.west);
\node[anchor=north, blue] at (buffer.south) {Buffer};
	\node [block, right=0.5cm of buffer] (actuator) {Actuator};
\draw[very thick ,-] (buffer.east) -- (actuator.west);
\draw[->,>=stealth',semithick, blue] (2.6,0.5) arc[radius=0.65, start angle=20, end angle=-45];
\node[anchor=east, blue] at (3,-0.6) {$\cnoise_{t+\ell}$};
\end{scope}
\begin{scope}[shift={(0,-3)}]
		\node[coordinate] (0) at (0,0) {};
	\node [block, right=-0.1cm of 0] (controller) {$\offset_t,\gain_t$};
	\node[anchor=north, blue] at (controller.south) {Controller};
\draw[very thick ,-] (controller.east) -- (2,0);
\node[coordinate] (1) at (0,1) {};
\draw[very thick ,->] (1.7,1.6) -- (2.5,1.6);
\node [blockgreenl, right=1.6cm of 1] (packet) {$\eta_{t+\ell} + \check{\control}_{t+\ell}$};
\draw[decorate, decoration=switch] (2cm,0cm) -- ++(1cm,0cm);	
	\node [block, right=3.5cm of 0] (buffer) {non-empty};
\draw[very thick ,-] (3,0) -- (buffer.west);
\node[anchor=north, blue] at (buffer.south) {Buffer};
	\node [block, right=0.5cm of buffer] (actuator) {Actuator};
\draw[very thick ,-] (buffer.east) -- (actuator.west);
\draw[->,>=stealth',semithick, blue] (2.6,0.5) arc[radius=0.65, start angle=20, end angle=-45];
\node[anchor=east, blue] at (3,-0.6) {$\cnoise_{t+\ell}$};
\end{scope}
	\end{tikzpicture}	
	
\end{adjustbox}
\caption{Control channel and buffer at time $t+\ell$ for \ref{a:repetitive}: The red block is transmitted as a burst only if the buffer is empty. The green blocks are transmitted at time $t+\ell$ for $\ell=0,\cdots,\reachindex-1$ and are directly applied to the plant.}
\vspace{-1em}
\label{fig:policyrep}
\end{figure}
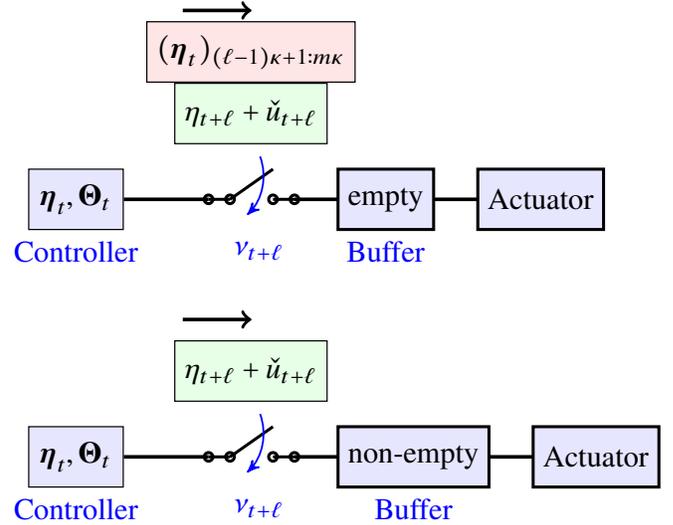
\begin{example}
{ \rm
		As an example, consider $N=3,\reachindex=2$. According to this protocol, the following control sequence is valid:
		\begin{align*}
			{\control}_t &= \cnoise_t\eta_t\\
			{\control}_{t+1} &=\cnoise_{t+1}\eta_{t+1} + (1-\cnoise_{t+1})\cnoise_{t}\eta_{t+1}+ \cnoise_{t+1}\theta_{1,t}\ee_1(\wnoise_t)\\
            {\control}_{t+2}&= \eta_{t+2} + \theta_{2,t}\ee_1(\wnoise_t) + \theta_{2,t+1}\ee_2(\wnoise_{t+1}).
		\end{align*}
}
\end{example}
		Correspondingly, we have the following optimal control problem in lieu of \eqref{e:opt problem}:
		\begin{equation}
		\label{e:rep opt control problem}
		\begin{aligned}
			& \minimize_{\offset_t, \gain_t}	&& \EE_{\st_t}\bigl[ \inprod{\st_{t:N+1}}{\calQ \st_{t:N+1}} + \inprod{\control^a_{t:N}}{\calR \control^a_{t:N}} \bigr]\\
			& \sbjto	&& \begin{cases}
				\text{constraint }\eqref{e:augmented},\\
				\text{control }\eqref{e:policyrepetitive},\\
				\control_t\in\controlset\text{ for all }t.
			\end{cases}
		\end{aligned}
		\end{equation}
\begin{remark}
{ \rm
We can observe that \eqref{e:policyrepetitive} is in the form of \eqref{e:policyburst}, hence the underlying optimization program for \ref{a:repetitive} can be easily obtained from that of \ref{a:burst} \revised{by replacing $\calK$ with $\calG$}.
}
\end{remark}
\begin{remark}
{ \rm
The bit rate in transmission protocol \ref{a:repetitive} can be reduced by storing the successfully transmitted values of saturated feedback terms in buffer. An alternative transmission protocol can be found by repetitively transmitting the future blocks of the offset parameter and past blocks of $\ee(\wnoise_{t:N-1})$ until the first successful transmission. The successfully transmitted blocks of the offset parameter and $\ee(\wnoise_{t:N-1})$ are stored in a buffer near the actuator. The corresponding blocks of the gain matrix are transmitted at each instant  along with only those blocks of $\ee(\wnoise_{t:N-1})$ which are absent in the buffer but are required to compute the control according to \eqref{e:controlt}. 
}
\end{remark}
Let us define $\mu_{\calG} \Let \EE[\calG]$, $\Sigma_{\calG} \Let \EE[\calG\transp(\calB\transp\calQ\calB+\calR)\calG].$ Now, we have the following theorem:
\begin{theorem}\label{th:rep}
		Consider the system \eqref{e:system} and suppose that Assumption \ref{a:blanket} holds under the transmission protocol \ref{a:repetitive}. Then
			for every $t=0, \reachindex, 2\reachindex,\ldots$, the optimization problem \eqref{e:burst opt control problem} is convex, feasible, and can be rewritten as the following tractable program with tightened constraints:
\begin{align}
			&\minimize_{\offset_t, \gain_t} \; \; \; \; \; c_t + 2\trace(\gain_t\transp\mu_{\calS}\transp\calB\transp\calQ\calD\Sigma_{\ee}^{\prime}) + 2\st_t\transp\calA\transp\calQ\calB\mu_{\calG}\offset_t  \nonumber \\
& \quad \quad \quad \quad \quad + \trace(\offset_t\transp\Sigma_{\calG}\offset_t)+
\trace(\gain_t\transp\Sigma_{\calS}\gain_t\Sigma_{\ee}) \\ \label{e:programrep}
			&\sbjto: \; \;  \text{Constraints } \eqref{e:gainStructsingle}, \; \eqref{e:decisionboundsingle}.\nn
\end{align}				
				
			\end{theorem}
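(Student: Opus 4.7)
The plan is to parallel the proof of Theorem~\ref{th:burst}, exploiting the remark preceding the statement: the control \eqref{e:policyrepetitive} has exactly the functional form of \eqref{e:policyburst} with the random block-diagonal matrix $\calK$ replaced by $\calG$. Since $\calG$ is a measurable function of the Bernoulli dropouts $(\cnoise_{t+i})_{i=0}^{\reachindex-1}$ through the recursion defining $\rho_{t+\ell}$, and Assumption~\ref{a:blanket} gives that $(\cnoise_{t+i})$ is independent of $(\wnoise_{t+i})$, the same probabilistic decompositions used in the burst case apply, and only the matrices $\mu_\calK,\Sigma_\calK$ need to be relabeled as $\mu_\calG,\Sigma_\calG$ in the resulting quadratic form.

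Concretely, I would substitute the closed-loop expansion \eqref{e:augmented} and the policy \eqref{e:policyrepetitive} into the cost of \eqref{e:rep opt control problem}, and then take $\EE_{\st_t}[\cdot]$. Three standard facts zero out all the mixed terms: (i) $\EE[\wnoise_{t+i}]=0$ eliminates the linear-in-noise cross term involving $\calG\offset_t$; (ii) $\EE[\ee_i(\wnoise_{t+i-1})]=0$, a consequence of the symmetric distribution of $\wnoise$ together with a symmetric saturation $\ee_i$, kills the mixed quadratic $\EE[\offset_t\transp\calG\transp(\calB\transp\calQ\calB+\calR)\calS\gain_t\ee(\wnoise_{t:N-1})]$; and (iii) the analogous factorization, combined with trace cyclicity, identifies the surviving cross term involving $\wnoise_{t:N}$ as $2\trace(\gain_t\transp\mu_\calS\transp\calB\transp\calQ\calD\Sigma_\ee^{\prime})$. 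Collecting the surviving contributions $\offset_t\transp\Sigma_\calG\offset_t$, $\trace(\gain_t\transp\Sigma_\calS\gain_t\Sigma_\ee)$, $2\st_t\transp\calA\transp\calQ\calB\mu_\calG\offset_t$, and the deterministic remainder $c_t = \st_t\transp\calA\transp\calQ\calA\st_t + \trace(\calD\transp\calQ\calD\Sigma_W)$ yields exactly \eqref{e:programrep}.

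The pointwise constraint $\control_{t+\ell}\in\controlset$ is handled exactly as in Theorems~\ref{th:seq}--\ref{th:burst}: the triangle inequality and $\norm{\ee(\wnoise_{t:N-1})}_\infty\leq\varphi_{\max}$ applied row-wise to $\control_{t:N}=\offset_t+\gain_t\ee(\wnoise_{t:N-1})$ give the sufficient linear bound \eqref{e:decisionboundsingle}; coupled with the sparsity pattern \eqref{e:gainStructsingle}, the feasible set is a nonempty polyhedron since $(\offset_t,\gain_t)=(0,0)$ always lies in it. Convexity follows because $\Sigma_\calG$ and $\Sigma_\calS$ are positive semidefinite (each is an expectation of a matrix of the form $X\transp(\calB\transp\calQ\calB+\calR)X$ with $\calQ\succeq 0$ and $\calR\succ 0$) and $\Sigma_\ee$ is a covariance matrix, so the Hessian of \eqref{e:programrep} is PSD and block-diagonal in $(\offset_t,\gain_t)$. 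The main subtlety to guard against is that $\calG$ and $\calS$ are \emph{not} independent of each other—they share the same underlying Bernoulli variables—so the vanishing of the mixed quadratic cross term must rest on the joint independence of the pair $(\calG,\calS)$ from $\ee(\wnoise_{t:N-1})$ and from $\wnoise_{t:N}$, together with the zero-mean properties (i)--(ii), rather than on any independence between $\calG$ and $\calS$ themselves; once this distinction is made, the bookkeeping parallels the burst case essentially verbatim.
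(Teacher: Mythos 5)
Your proposal is correct and follows essentially the same route as the paper, which proves Theorem~\ref{th:rep} by invoking Lemma~\ref{t:objective} (itself a relabelling of the explicit expansion in Lemma~\ref{t:objectivesingle}) and the constraint-tightening argument of Theorem~\ref{th:seq}, with $\calK$ replaced by $\calG$. Your added remark that the vanishing of the mixed $\offset_t$--$\gain_t$ term relies on joint independence of $(\calG,\calS)$ from the noise together with $\EE[\ee_i(\wnoise)]=0$, not on independence of $\calG$ from $\calS$, is a correct and worthwhile clarification of a point the paper leaves implicit.
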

\begin{remark}
{ \rm
Our results for mean zero stochastic noise can be easily extended to non-zero mean noise by following the approach presented in \cite{hokayem2009stochastic}.
}
\end{remark}
\begin{remark}
{ \rm
The transmitted data under three transmission protocols for the considered example with $N=3$ and $\reachindex=2$ is illustrated in Fig.\  \ref{fig:bothPolicyTime}. The block transmitted through sequential transmission \ref{a:seq} is directly applied to the plant. In contrast, in case of burst transmission \ref{a:burst}, the transmitted offset parameters are stored in a buffer. At the first instant $\eta_t$ is applied to the plant. For the second instant the saturated value of calculated disturbance $\ee_1(\wnoise_t)$ multiplied with gain parameter $\theta_{1,t}$ of the decision variable is transmitted which is added to $\eta_{t+1}$ before its application to the plant. In the case of \ref{a:repetitive}, $\eta_t$ and $\eta_{t+1}$ are transmitted at time $t$ because buffer is empty and $\eta_t$ is applied to the plant. If the transmitted data is dropped at time $t$, we need to transmit future values of offset parameter at time $t+1$  along with the control $\control_{t+1}$. In this example $\reachindex=2$, so only the control $\control_{t+1}$ is transmitted and is applied to the plant if successfully received at the actuator. 
\begin{figure}[h]
\begin{adjustbox}{width = \columnwidth}
\begin{tikzpicture}
	\tikzstyle{block} = [draw, fill=cyan!10, rectangle, 
	    minimum height=1cm, minimum width=2.63cm]
	\node[coordinate] (0) at (0,0) {};
	\node [block, right = 2.7 cm of 0] (firstb) {$\eta_t, \eta_{t+1}$};
	\node [block, right=0cm of 0] (firsts) {$\eta_t$};
	\node [block, right=5.4cm of 0] (firstr) {$\eta_t, \eta_{t+1}$};
	
	\node [block, above=0 cm of firstb] (secondb) {$\theta_{1,t}\ee_1(\wnoise_t)$};	
	
	\node [block, above=0 cm of firsts] (seconds) {$\eta_{t+1} + \theta_{1,t}\ee_1(\wnoise_t)$};
	\node[anchor=east, blue] at (0,0.1) {t};
	\node[anchor=east, blue] at (0,1.1) {t+1};	
    \node[anchor=south, blue] at (1.3,-1) {\ref{a:seq}};
	\node[anchor=south, blue] at (3.9,-1) {\ref{a:burst}};
	\node[anchor=south, blue] at (6.6,-1) {\ref{a:repetitive}};
	\node [block, above=0 cm of firstr] (secondr) {$\eta_{t+1} + \theta_{1,t}\ee_1(\wnoise_t)$};
	\end{tikzpicture}	
	
\end{adjustbox}
\caption{The transmitted blocks for $N=3$ and $\reachindex = 2$, with given optimization instant $t = 0,\reachindex,2\reachindex,\cdots$}
\vspace{-1.5em}
\label{fig:bothPolicyTime}
\end{figure}
}
\end{remark}
\revised{			
\begin{remark}
{\rm
The mean square bound of the closed-loop plant depends upon the three parameters $a,J,M$ in Theorem \ref{t:PemRos-99}, where $a$ is the expected drift in the components of the states that are bigger than $J$, and $M$ depends upon the fourth moment of the additive noise. An explicit expression for the mean square bound can be found in \cite[Corollary 2]{ref:PemRos-99} and \cite[Remark 2.7]{ref:ChaRamHokLyg-12}. In the present setting of control channel erasures, $a=\zeta p$ as obtained in proof of Lemma \ref{t:msbsingle} and Lemma \ref{t:msb}, where $\zeta \in \left]0, \frac{U_{\max}}{\sqrt{d_o}\sigma_{1}\left(\reachab_{\reachindex}(\Aortho,\Bortho)^{+}\right)} \right[$ depends upon the bound on control $U_{\max}$ and $p$ is the successful transmission probability. We demonstrate in Fig.\ \ref{Fig:msb} that the mean square bound increases with increase in the variance of the additive noise and decreases with increase in the successful transmission probability $p$.
}
\end{remark} 
}
\begin{remark}
{\rm 
Note that by setting some block elements of $\gain_t$ to zero, the size of the above optimization problems can be reduced significantly, \revised{see \cite{ref:ChaHokLyg-11}} for a discussion.
}
\end{remark}
\begin{remark}
{\rm
\secondrevised{
Transmission protocols require computation and storage facilities at the actuator end. A summary is given in Table \ref{tab:ProtocolDifference}. Notice that the transmission protocol \ref{a:seq} does not need storage or computational resources at the actuator; therefore, traditional actuators can be used for \ref{a:seq}. In case of \ref{a:burst} and \ref{a:repetitive}, we need storage facilities at the actuator, and for \ref{a:burst}, we in addition, need some computational capabilities at the actuator.}\\
\begin{table}[h]
\caption {Transmission protocols on the basis of availability of resources at actuator:} \label{tab:ProtocolDifference} 
\begin{center}
\begin{tabular}{ l | c | r }
  \hline			
   ~ & Computation at actuator & Storage at actuator \\
  \hline
  \ref{a:seq} & No & No \\
  \ref{a:burst} & Yes & Yes \\
  \ref{a:repetitive} & No & Yes \\
  \hline  
\end{tabular}
\end{center}
\end{table}
}
\end{remark}
\section{Stability} 
		\label{s:stability}	
\secondrevised{		
	It is well-known that a bounded robust positively invariant set does not exist in the context of \eqref{e:system} due to the unbounded stochastic noise \cite{ref:May-14}. Therefore, standard Lyapunov-based arguments for proving stability are not applicable. Moreover, \eqref{e:system} cannot be stabilized with the help of bounded controls if the matrix \(A\) has spectral radius greater than unity {\cite[Theorem 1.7]{ref:ChaRamHokLyg-12}}. In the presence of this fundamental limitation, we restrict our attention to Lyapunov stable plants for stability:
\begin{assumption}\label{a:stabilizability}
We assume that the matrix pair $(A,B)$ is stabilizable and the system matrix $A$ is Lyapunov stable.
\end{assumption}
Recall that a Lyapunov stable matrix has all its eigenvalues in the closed unit disk, and those on unit circle have equal algebraic and geometric multiplicities. Stability of such systems in presence of stochastic noise is not obvious.} We aim to establish the property of mean-square boundedness of the closed-loop plant. We recall the following definition:
\begin{definition}\cite[\S III.A]{chatterjee-15}
{\rm
An $\R^d$-valued random process $(\st_t)_{t \in \Nz}$ with given initial condition $\st_0 = \stinit$ is said to be mean square bounded if \[ \sup_{t \in \Nz}\EE_{\stinit}[\norm{\st_t}^2] < +\infty .\] 
}
\end{definition}  
		Let us first demonstrate the existence of a controller that ensures mean-square boundedness of the system \eqref{e:system} if \(\A\) is Lyapunov stable. Without loss of generality we assume that the pair \((\A, \B)\) is in the real Jordan canonical form. It is then a standard observation \cite{ref:HokChaRamChaLyg-10} that then the system dynamics \eqref{e:system} is of the form
\begin{equation}
		\label{e:orthogonal decomposition}
		\pmat{\stortho_{t+1} \\ \st^s_{t+1}} = \pmat{\Aortho & 0\\ 0 & \Aschur}\pmat{\stortho_{t} \\ \st^s_{t}}	+ \pmat{\Bortho\\ \Bschur}\control_t + \pmat{\wnoise_t^o \\ \wnoise_t^s}, 
		\end{equation}
		where \(\Aortho\in\R^{d_o\times d_o}\) is orthogonal and \(\Aschur\in\R^{d_s\times d_s}\) is Schur stable, with \(d = d_o + d_s\). Since the pair \((\A, \B)\) is stabilizable, there exists a positive integer \(\reachindex\) such that the pair \((\Aortho, \Bortho)\) is reachable in \(\reachindex\)-steps, i.e., \(\rank\bigl(\reachab_\reachindex(\Aortho, \Bortho)\bigr) = d_o\), where 
		\[
			\reachab_k(\Aortho, M) \Let \pmat{\Aortho^{k-1}M & \Aortho^{k-2}M & \cdots & M}
		\]
		for a matrix \(M\) of appropriate dimension. It suffices to focus on the subsystem \((\Aortho, \Bortho)\) since due to boundedness of the controls we have mean square boundedness of the Schur stable subsystem by standard arguments \cite[\S IV]{hokayem2009stochastic}.
\par Two types of drift conditions are available in general: Geometric drift conditions and constant negative drift conditions. 
\secondrevised{Geometric drift conditions are presented in \cite{meyn2012markov} and constant negative drift conditions in \cite{ref:PemRos-99}; we refer the reader to \cite[Proposition 1, Proposition 2]{chatterjee-15} for a summary and related discussion. Geometric drift conditions rely on a Lyapunov like function $f: \R^d \lra \R_{\geq 0} $ such that for $\lambda \in \; ]0,1[$ following inequality holds:
\begin{equation}\label{e:GDrift}
\EE_{\st_t} \left[ f(A\st_t + B \control_t + \wnoise_t) \right] \leq \lambda f(\st_t) \quad \text{ for all } \st_t \notin K,  
\end{equation}
for all $t \in \Nz$, where $K$ is some compact set. The construction of a function $f$ that satisfies \eqref{e:GDrift} for a given (but otherwise arbitrary) hard bound on the control inputs may not be possible. Therefore, we follow the approach in \cite{ref:ChaRamHokLyg-12} and employ following constant negative drift conditions in this article:}
for any given $r, \epsilon > 0$, $\zeta \in \left]0, \frac{U_{\max}}{\sqrt{d_o}\sigma_{1}\left(\reachab_{\reachindex}(\Aortho,\Bortho)^{+}\right)} \right[$ and for any $t=0, \reachindex, 2\reachindex, ...$, the control $u_t \in \controlset$ is chosen such that for $ j=1,2,\cdots ,d_o$ the following drift conditions hold:
\begin{align}
\EE_{\stortho_t} \left[ \left( (\Aortho^{\reachindex})\transp \reachab_{\reachindex}(\Aortho, \Bortho)\control_{t:\reachindex} \right)^{(j)} \right] & \leq -\zeta \nonumber \\
\text{ whenever }  & \left( \stortho_{t} \right)^{(j)} \geq r + \epsilon,
\label{e:drift1} \\
\EE_{\stortho_t} \left[ \left( (\Aortho^{\reachindex})\transp \reachab_{\reachindex}(\Aortho, \Bortho)\control_{t:\reachindex}\right)^{(j)} \right] & \geq \zeta \nonumber \\ 
\text{ whenever }  & \left( \stortho_{t}  \right)^{(j)}\leq -r - \epsilon. \label{e:drift2}    
\end{align}
\begin{remark}
{\rm
The above drift conditions push the $j^{th}$ component of $\left( \Aortho^{\reachindex} \right)\transp\stortho_{t+\reachindex}$ towards the origin whenever the $j^{th}$ component of $\stortho_t$ is outside the set \secondrevised{$]-r-\epsilon,r+\epsilon [$ } by application of the first $\reachindex$ elements of the control policy. 
}
\end{remark}
\begin{remark}
{\rm
The amount of constant drift $\zeta$ is chosen from the interval $\left]0, \frac{U_{\max}}{\sqrt{d_o}\sigma_{1}\left(\reachab_{\reachindex}(\Aortho,\Bortho)^{+}\right)} \right[$ \; to satisfy the hard bound on control.
}
\end{remark}
\begin{remark}
{\rm
Component-wise drift conditions are used to remove the lower bound from the control authority to extend the main results of \cite{ref:HokChaRamChaLyg-10} and \cite{ref:amin-10}.
}
\end{remark}
\revised{
\begin{remark}
{\rm
The left hand sides of the drift conditions \eqref{e:drift1} and \eqref{e:drift2} are continuous in the matrix pair $(\Aortho,\Bortho)$. Therefore, small perturbations in $(\Aortho,\Bortho)$ will lead to small variations in drift, and since $\zeta$ is strictly bigger than $0$, the sign of the drift will not change in a discontinuous fashion.    
}
\end{remark}
}
\secondrevised{
Let us consider the first $\reachindex$ blocks of \eqref{e:policy}: 
\begin{equation}\label{decision}
\control_{t: \reachindex} \Let (\offset_t)_{1: \reachindex m} +(\gain_t)_{1: \reachindex m}\ee(\wnoise_{t:N-1}).
\end{equation}
By substituting \eqref{decision} in \eqref{e:drift1} and \eqref{e:drift2}, we obtain the following stability constraints:
\begin{align}
\Bigl( (\Aortho^{\reachindex })\transp\reachab_{\reachindex}(\Aortho, \Bortho)(\offset_t)_{1: \reachindex m} \Bigr)^{(j)} & \leq -\zeta \nonumber \\ \text{whenever} & \left( \stortho_{t} \right)^{(j)} \geq r + \epsilon, \label{e:decisonConstraint1single} \\	
\Bigl( (\Aortho^{ \reachindex})\transp\reachab_{\reachindex}(\Aortho, \Bortho)(\offset_t)_{1: \reachindex m}\Bigr)^{(j)} & \geq \zeta  \nonumber \\ \text{whenever} & \left( \stortho_{t} \right)^{(j)} \leq -r - \epsilon. \label{e:decisonConstraint2single}
\end{align}
We have the following theorem: 
\begin{theorem}\label{th:stability}
Let Assumptions \ref{a:blanket} and \ref{a:stabilizability} hold, and the constraints \eqref{e:decisonConstraint1single}-\eqref{e:decisonConstraint2single} be embedded in the corresponding optimization programs of Theorems \ref{th:seq}, \ref{th:burst} and \ref{th:rep}. Then corresponding optimization problems are convex, feasible, and for any initial condition \(\stinit \in \R^{d}\) successive application of the control law given by them renders the closed-loop system mean square bounded.
\end{theorem}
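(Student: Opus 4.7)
The plan is to verify the three conclusions of the theorem -- convexity, feasibility, and mean square boundedness -- in turn. Convexity is inherited: each of the objectives in \eqref{e:programsingle}, \eqref{e:program}, and \eqref{e:programrep} is a convex quadratic in $(\offset_t,\gain_t)$ with positive semidefinite Hessian built from $\Sigma_{\calS}$, $\Sigma_{\calK}$, $\Sigma_{\calG}$, and $\Sigma_{\ee}$; the original constraints \eqref{e:gainStructsingle}--\eqref{e:decisionboundsingle} are convex; and the augmented stability constraints \eqref{e:decisonConstraint1single}--\eqref{e:decisonConstraint2single} are linear in the first $\reachindex m$ entries of $\offset_t$. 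For feasibility I would exhibit the explicit candidate $\gain_t = 0$, the last $N-\reachindex$ blocks of $\offset_t$ equal to zero, and
\begin{equation*}
(\offset_t)_{1:\reachindex m} \;=\; -\zeta\,\reachab_{\reachindex}(\Aortho,\Bortho)^{+}\,\Aortho^{\reachindex}\,s(\stortho_t),
\end{equation*}
where $s(\stortho_t)\in\{-1,0,+1\}^{d_o}$ is chosen componentwise: $s_j(\stortho_t)=+1$ if $(\stortho_t)^{(j)}\ge r+\epsilon$, $s_j(\stortho_t)=-1$ if $(\stortho_t)^{(j)}\le -r-\epsilon$, and $0$ otherwise. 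Using $\reachab_{\reachindex}(\Aortho,\Bortho)\reachab_{\reachindex}(\Aortho,\Bortho)^{+}=I_{d_o}$ and orthogonality of $\Aortho^{\reachindex}$, a direct substitution shows the $j$th component of $(\Aortho^{\reachindex})^{\transp}\reachab_{\reachindex}(\Aortho,\Bortho)(\offset_t)_{1:\reachindex m}$ equals $-\zeta s_j(\stortho_t)$, so \eqref{e:decisonConstraint1single}--\eqref{e:decisonConstraint2single} hold; moreover $\|(\offset_t)_{1:\reachindex m}\|_{\infty}\le\zeta\sqrt{d_o}\,\sigma_1\bigl(\reachab_{\reachindex}(\Aortho,\Bortho)^{+}\bigr)<U_{\max}$ by the choice of $\zeta$, so \eqref{e:decisionboundsingle} is also met.

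For mean square boundedness I would use the decomposition \eqref{e:orthogonal decomposition}. The Schur subsystem is mean square bounded as a standard consequence of $\control^a_t\in\controlset$ together with the fourth-moment bound on $\wnoise_t$, cf.\ \cite[\S IV]{hokayem2009stochastic}. For the orthogonal subsystem I would sample at the recalculation instants $\tau_k\Let k\reachindex$. Iterating \eqref{e:orthogonal decomposition} over one window and using $(\Aortho^{\reachindex})^{\transp}\Aortho^{\reachindex}=I_{d_o}$ gives
\begin{equation*}
(\Aortho^{\reachindex})^{\transp}\stortho_{\tau_{k+1}} \;=\; \stortho_{\tau_k} + (\Aortho^{\reachindex})^{\transp}\bigl[\reachab_{\reachindex}(\Aortho,\Bortho)\,\control^{a}_{\tau_k:\reachindex} + W^{o}_k\bigr],
\end{equation*}
where $W^{o}_k$ collects the zero-mean noise inputs over the window. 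Conditioning on $\stortho_{\tau_k}$, the saturated feedback contribution in $\control^{a}_{\tau_k:\reachindex}$ has zero mean because each $\ee_i$ is symmetric about the origin, each $\wnoise_{\tau_k+i}$ is symmetric by Assumption \ref{a:blanket}, and these noises are independent of the past and of the channel variables. The offset contribution enters linearly through the channel matrix $\calS$, $\calK$, or $\calG$, whose expectation on the first $\reachindex m$ rows is $pI_{\reachindex m}$ for \ref{a:seq} and \ref{a:burst}, and is diagonal with entries at least $p$ for \ref{a:repetitive} since $\EE[\rho_{\tau_k+\ell}]\ge p$ uniformly in $\ell$. Combined with \eqref{e:decisonConstraint1single}--\eqref{e:decisonConstraint2single} this yields a componentwise conditional drift of magnitude at least $p\zeta$ toward the origin whenever $|(\stortho_{\tau_k})^{(j)}|\ge r+\epsilon$.

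The closing step is to invoke the Pemantle--Rosenthal style result (Theorem \ref{t:PemRos-99}) componentwise: the constant negative drift of magnitude $p\zeta$ has been established outside $[-(r+\epsilon),\,r+\epsilon]$ on each component, and the one-step conditional fourth moment of the increment is uniformly bounded since $\control^a_{\tau_k:\reachindex}$ has bounded support and $\wnoise_t$ has a bounded fourth moment. Hence $\sup_k\EE_{\stinit}[\|\stortho_{\tau_k}\|^{2}]<\infty$, and an interpolation via $\stortho_{\tau_k+\ell}=\Aortho^{\ell}\stortho_{\tau_k}+\sum_{i=0}^{\ell-1}\Aortho^{\ell-1-i}(\Bortho\control^a_{\tau_k+i}+\wnoise^{o}_{\tau_k+i})$, combined with orthogonality of $\Aortho$ and the second-moment bound on the noise, extends the estimate to all $t$; together with the Schur contribution this yields $\sup_t\EE_{\stinit}[\|\st_t\|^{2}]<\infty$. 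The main technical obstacle I anticipate is the unified treatment of $\calS$, $\calK$, and $\calG$: verifying that the componentwise drift propagates from the controller to the actuator with a factor no smaller than $p$ requires careful sign analysis, particularly for \ref{a:repetitive} where the channel indicators couple nonlinearly through products $\prod_s(1-\cnoise_{\tau_k+s})$ appearing in $\rho_{\tau_k+\ell}$, so that one cannot simply factor the expectation as a scalar multiple of the controller-side drift.
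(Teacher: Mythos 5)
Your plan follows the paper's own proof almost step for step: convexity by inspection, feasibility via an explicit zero-gain candidate built from $\reachab_{\reachindex}(\Aortho,\Bortho)^{+}$ with the same singular-value bound tied to the choice of $\zeta$, and mean-square boundedness via the orthogonal/Schur splitting, $\reachindex$-subsampling, a componentwise constant drift of size $p\zeta$, and Theorem \ref{t:PemRos-99} (the paper packages this last part as Lemmas \ref{t:msbsingle} and \ref{t:msb}; its feasible candidate uses $\sat^{\infty}_{r,\zeta}$ where you use a sign vector with a dead zone, which is an immaterial difference for feasibility).

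One step in your writeup has a gap. Your drift inequality compares $\bigl((\Aortho^{\reachindex})^{\transp}\stortho_{\tau_{k+1}}\bigr)^{(j)}$ with $(\stortho_{\tau_k})^{(j)}$; these are components of two \emph{differently rotated} vectors, so they do not form increments $X_{k+1}-X_k$ of a single scalar process to which Theorem \ref{t:PemRos-99} can be applied, and applying it ``componentwise to $\stortho_{\tau_k}$'' as written does not go through. The paper resolves this by defining the accumulated-rotation process $y_{\reachindex t}\Let(\Aortho^{\transp})^{\reachindex t}\stortho_{\reachindex t}$, establishing the constant negative drift for the components of $y$, applying Theorem \ref{t:PemRos-99} to $(y^{(i)})_{+}$ and $(-y^{(i)})_{+}$ separately, and only then using orthogonality of $\Aortho$ to transfer the second-moment bound back to $\stortho_{\reachindex t}$; your argument needs this process made explicit. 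Separately, the obstacle you flag for \ref{a:repetitive} is genuine rather than imagined: $\EE[\calG]$ restricted to the first $\reachindex m$ rows is diagonal with \emph{distinct} entries $1-(1-p)^{\ell+1}$, so the expected actuator-side drift is not a scalar multiple of the controller-side quantity constrained in \eqref{e:decisonConstraint1single}--\eqref{e:decisonConstraint2single}, unlike \ref{a:seq} and \ref{a:burst} where the factor is exactly $p$. The paper's Lemmas \ref{t:msbsingle} and \ref{t:msb} only cover those two cases cleanly and dispose of \ref{a:repetitive} with a one-line appeal to ``the same line of arguments,'' so the additional sign analysis you anticipate is indeed required and is not supplied by the paper either.
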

}
\section{Numerical Experiments} \label{s:simulation}
		\begin{figure}[t]
\begin{adjustbox}{width=\columnwidth}
\input{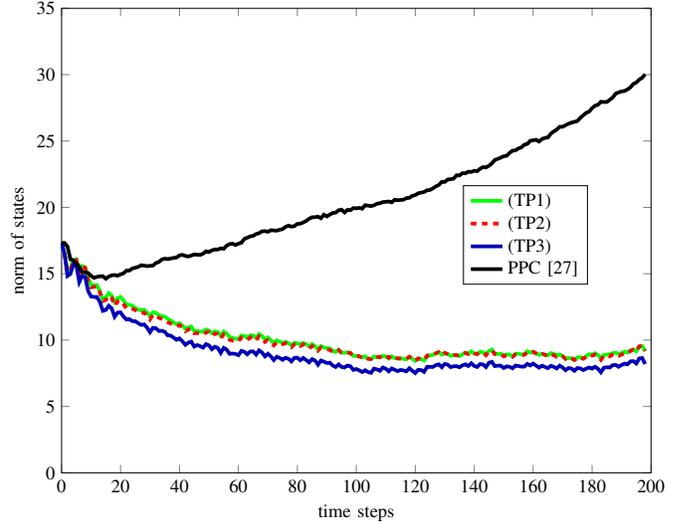}
\end{adjustbox}
\caption{Empirical average norm of states: dropout is i.i.d.}
\vspace{-1.5em}
\label{Fig:normxiid}
\end{figure}		

In this section we present simulations to illustrate our results. Consider the three dimensional linear stochastic system 
\begin{equation*}
\st_{t+1} = \pmat{0&-0.80&-0.60\\ 0.80&-0.36&0.48\\ 0.60&0.48&-0.64}\st_t + \pmat{0.16\\0.12\\0.14}\control_t + \wnoise_t, \quad
|u_t|\leq 15,
\end{equation*}
where noise sequence $\wnoise_t$ is i.i.d. Gaussian of mean zero with variance $2I_3$ and the initial condition is $\stinit=\pmat{10&10&-10} \transp$. The eigenvalues of $A$ are $\pm i$ and $-1$. So, the eigenvalues of $A$ on unit circle are semi-simple. The channel from the controller to the actuator is assumed to be of erasure type. We consider i.i.d. dropout in \secref{s:iid} and correlated dropout in \secref{s:correlated} . 
\par We solved a constrained finite-horizon optimal control problem corresponding to states and control weights \[ Q = I_3, Q_f = \pmat{12&-0.1&-0.4\\-0.1&19&-0.2\\-0.4&-0.2&2}, R =2 .\] We selected an optimization horizon, \(N=4\), recalculation interval \(N_r = \reachindex = 3 \) and simulated the system responses. We selected the nonlinear bounded term $\ee(\wnoise_{t:N-1})$ in our policy to be a vector of scalar sigmoidal functions $\varphi(\xi)=\frac{1- e^{-\xi}}{1+ e^{-\xi}}$ applied to each coordinate of the noise vector $\wnoise_t$. The covariance matrices $\Sigma_{\ee},\Sigma_W$ and $\Sigma_{\ee^{\prime}}$ that are required to solve the optimization problem were computed empirically via classical Monte Carlo methods \cite{ref:robert-13} using $10^6$ i.i.d. samples. Computations for determining our policy are carried out in the MATLAB-based software package YALMIP \cite{ref:lofberg-04} and are solved using SDPT3-4.0 \cite{ref:toh-06}. 
\par In plots for SMPC, the decision variables $\offset_t$ and $\gain_t$ are computed at times $t = 0,\reachindex,2\reachindex,\cdots$, by solving optimization problems according to Theorems \ref{th:seq}, \ref{th:burst} and \ref{th:rep}, respectively. The stability constraints \eqref{e:drift1} and \eqref{e:drift2} are satisfied by putting $r=\zeta = 0.4729$ and $\epsilon = 0.02$. The control bound $U_{\max} =15$ is satisfied by all three protocols. All the averages are taken over 300 sample paths.
\subsection{I.i.d. dropout} \label{s:iid}
The dropout is considered to be i.i.d. with successful transmission probability 0.8.
\par We compare among SMPC \ref{a:seq}, \ref{a:burst}, \ref{a:repetitive} and PPC as in \cite{Quevedo-12}. The plot of the average norm of states versus time is presented in Fig. \ref{Fig:normxiid}. The transmission protocol \ref{a:repetitive} performs better than the rest. All three protocols of SMPC perform better than PPC.
The average actuator energy and average cost are compared in Fig.\ref{Fig:ActEnergyiid} and Fig.\ref{Fig:costperstageiid}, respectively. Average cost and average actuator energy for \ref{a:seq} and \ref{a:burst} are either comparable or less than those of PPC. The transmission protocol \ref{a:repetitive} employs  more actuator energy than \ref{a:seq} and \ref{a:burst}, but less than that of PPC. However, the average cost for \ref{a:repetitive} is the lowest. \revised{Fig.\ \ref{Fig:msb} shows the variations in mean square bound with respect to successful transmission probability and variance of additive noise. The mean square bound for all protocols increases monotonically with increase in variance of process noise and decrease in successful transmission probability. Mean-square bounds are lowest for \ref{a:repetitive} in all cases.}
\begin{center}
\begin{minipage}{\columnwidth}
      \centering
      \begin{minipage}{0.46\columnwidth}
          \begin{figure}[H]
            \begin{adjustbox}{width=\columnwidth}
%
%
%
%
\begin{tikzpicture}

\begin{axis}[%
width=2.40888888888889in,
height=3.44in,
area legend,
scale only axis,
xmin=0.5,
xmax=4.5,
xtick={1,2,3,4},
xticklabels={{\ref{a:seq}},{\ref{a:burst}},{\ref{a:repetitive}},{PPC \cite{Quevedo-12}}},
ymin=0,
ymax=50
]
\addplot[ybar,bar width=0.281777777777778in,draw=black,fill=mycolor4] plot table[row sep=crcr] {%
1	33.7741744483116\\
2	33.6996753639462\\
3	36.6084504061408\\
4	49.8687222333844\\
};
\addplot [color=black,solid,forget plot]
  table[row sep=crcr]{%
0.5	0\\
4.5	0\\
};
\end{axis}
\end{tikzpicture}%
             \end{adjustbox}
              \caption{\small{Empirical average actuator energy: dropout is i.i.d.}}
              \label{Fig:ActEnergyiid}
          \end{figure}
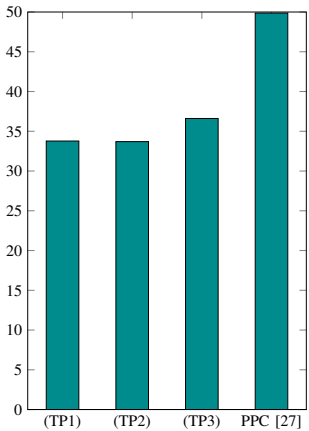
      \end{minipage}
      \quad
      \begin{minipage}{0.46\columnwidth}
          \begin{figure}[H]
          \begin{adjustbox}{width=\columnwidth}
%
%
%
%
\begin{tikzpicture}

\begin{axis}[%
width=2.40888888888889in,
height=3.47733333333333in,
area legend,
scale only axis,
xmin=0.5,
xmax=4.5,
xtick={1,2,3,4},
xticklabels={{\ref{a:seq}},{\ref{a:burst}},{\ref{a:repetitive}},{PPC \cite{Quevedo-12}}},
ymin=0,
ymax=800
]
\addplot[ybar,bar width=0.281777777777778in,draw=black,fill=mycolor4] plot table[row sep=crcr] {%
1	202.776755391265\\
2	200.671006882042\\
3	183.041948858437\\
4	745.765872167715\\
};
\addplot [color=black,solid,forget plot]
  table[row sep=crcr]{%
0.5	0\\
4.5	0\\
};
\end{axis}
\end{tikzpicture}%
              \end{adjustbox}
              \caption{\small{Empirical average cost per stage: dropout is i.i.d.}}
              \label{Fig:costperstageiid}
          \end{figure}
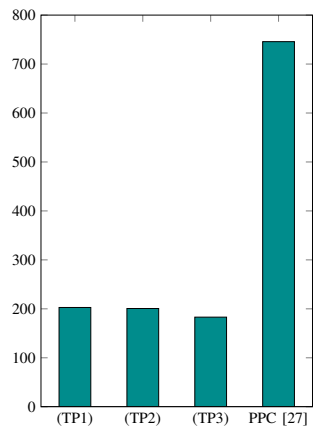
      \end{minipage}
  \end{minipage}
\end{center}  
\begin{figure}
\begin{adjustbox}{width=\columnwidth}
%
%
\begin{tikzpicture}

\begin{axis}[%
width=4.409in,
height=0.92in,
at={(0.74in,3.026in)},
scale only axis,
separate axis lines,
every outer x axis line/.append style={black},
every x tick label/.append style={font=\color{black}},
xmin=0.5,
xmax=10.3,
xtick=data,
xticklabels={0.1,0.2,0.3,0.4,0.5,0.6,0.7,0.8,0.9,1},
every outer y axis line/.append style={black},
every y tick label/.append style={font=\color{black}},
ymin=0,
ymax=10,
ylabel={\Large{log(msb)}},
axis background/.style={fill=white},
legend style={at={(0.83,0.35)},anchor=south west,legend cell align=left,align=left,draw=black}
]
 
\addplot[ybar,bar width=0.2,draw=black,fill=mycolor1,area legend] plot table[row sep=crcr] {%
0.7	4.22338934220487\\
1.7	3.93922824363018\\
2.7	3.63347532259205\\
3.7	3.45952477565365\\
4.7	3.31658743994136\\
5.7	3.08010982843064\\
6.7	2.94720695646038\\
7.7	2.86308686365951\\
8.7	2.72530250339878\\
9.7	2.5962888519847\\
};
\addlegendentry{\ref{a:seq}};
\addplot [color=black,solid,forget plot]
  table[row sep=crcr]{%
0	0\\
12	0\\
};
\addplot[ybar,bar width=0.2,draw=black,fill=mycolor2,area legend] plot table[row sep=crcr] {%
0.9	3.99480097886005\\
1.9	3.65473755637912\\
2.9	3.32907335464339\\
3.9	3.208258968273\\
4.9	3.06324020351665\\
5.9	2.91106486508849\\
6.9	2.78240143894255\\
7.9	2.70695094861573\\
8.9	2.65540269807271\\
9.9	2.54714785529048\\
};
\addlegendentry{\ref{a:burst}};
\addplot[ybar,bar width=0.2,draw=black,fill=mycolor3,area legend] plot table[row sep=crcr] {%
1.1	3.60242738099599\\
2.1	3.25775812773477\\
3.1	3.05109462282612\\
4.1	2.97013987580828\\
5.1	2.80747084643315\\
6.1	2.77985857612557\\
7.1	2.68701657445758\\
8.1	2.65898177566892\\
9.1	2.62618737790789\\
10.1	 2.54714785600379\\
};
\addlegendentry{\ref{a:repetitive}};
\node[draw] at (78,85) {$\Sigma_{\wnoise} = 0.1I_3$};

\end{axis}

\begin{axis}[%
width=4.409in,
height=0.92in,
at={(0.74in,1.748in)},
scale only axis,
separate axis lines,
every outer x axis line/.append style={black},
every x tick label/.append style={font=\color{black}},
xmin=0.5,
xmax=10.3,
xtick=data,
xticklabels={0.1,0.2,0.3,0.4,0.5,0.6,0.7,0.8,0.9,1},
every outer y axis line/.append style={black},
every y tick label/.append style={font=\color{black}},
ymin=0,
ymax=10,
ylabel={\Large{log(msb)}},
axis background/.style={fill=white}
]

\addplot[ybar,bar width=0.2,draw=black,fill=mycolor1,area legend] plot table[row sep=crcr] {%
0.7	6.80748372645801\\
1.7	6.50924281933943\\
2.7	6.20768738390114\\
3.7	6.00704747996702\\
4.7	5.74706281836283\\
5.7	5.56161961255932\\
6.7	5.38845358386494\\
7.7	5.19321597012598\\
8.7	5.06741762220959\\
9.7	4.88334773783803\\
};
\addplot [color=black,solid,forget plot]
  table[row sep=crcr]{%
0	0\\
12	0\\
};
\addplot[ybar,bar width=0.2,draw=black,fill=mycolor2,area legend] plot table[row sep=crcr] {%
0.9	6.60264881188527\\
1.9	6.26629991630807\\
2.9	6.0068573398257\\
3.9	5.80567436595016\\
4.9	5.57859069346613\\
5.9	5.38042937668845\\
6.9	5.21321521419547\\
7.9	5.08047299973552\\
8.9	4.92409988423503\\
9.9	4.76864623091547\\
};
\addplot[ybar,bar width=0.2,draw=black,fill=mycolor3,area legend] plot table[row sep=crcr] {%
1.1	6.24647875152364\\
2.1	5.79001330860528\\
3.1	5.55763012829857\\
4.1	5.35174183038032\\
5.1	5.21479512333938\\
6.1	5.09003084180082\\
7.1	4.98875476869025\\
8.1	4.94039550528453\\
9.1	4.85106619519136\\
10.1	4.76864623183385\\
};
\node[draw] at (80,89) {$\Sigma_{\wnoise} = I_3$};
\end{axis}

\begin{axis}[%
width=4.409in,
height=0.92in,
at={(0.74in,0.469in)},
scale only axis,
separate axis lines,
every outer x axis line/.append style={black},
every x tick label/.append style={font=\color{black}},
xmin=0.5,
xmax=10.3,
xtick=data,
xticklabels={0.1,0.2,0.3,0.4,0.5,0.6,0.7,0.8,0.9,1},
every outer y axis line/.append style={black},
every y tick label/.append style={font=\color{black}},
ymin=0,
ymax=11,
xlabel={\Large{p}},
ylabel={\Large{log(msb)}},
axis background/.style={fill=white}
]
\addplot[ybar,bar width=0.2,draw=black,fill=mycolor1,area legend] plot table[row sep=crcr] {%
0.7	9.25004443363967\\
1.7	8.97018457027959\\
2.7	8.7291233343371\\
3.7	8.56370330676915\\
4.7	8.40833120200576\\
5.7	8.27785893363954\\
6.7	8.11907524217513\\
7.7	7.97964382491735\\
8.7	7.89442045804099\\
9.7	7.75438270616841\\
};

\addplot [color=black,solid,forget plot]
  table[row sep=crcr]{%
0	0\\
12	0\\
};
\addplot[ybar,bar width=0.2,draw=black,fill=mycolor2,area legend] plot table[row sep=crcr] {%
0.9	9.25746073062812\\
1.9	9.00131647939932\\
2.9	8.75864420009489\\
3.9	8.5739607331837\\
4.9	8.40643185866636\\
5.9	8.22500596591792\\
6.9	8.12411938806138\\
7.9	7.996333811941\\
8.9	7.88897613387395\\
9.9	7.75438270616841\\
};

\addplot[ybar,bar width=0.2,draw=black,fill=mycolor3,area legend] plot table[row sep=crcr] {%
1.1	8.9574421420464\\
2.1	8.62885336727047\\
3.1	8.37356876371893\\
4.1	8.23540273030149\\
5.1	8.10131016994789\\
6.1	7.98352369139941\\
7.1	7.90304828124699\\
8.1	7.83769656740883\\
9.1	7.80202939527337\\
10.1	7.7543825932316\\
};
\node[draw] at (80,96) {$\Sigma_{\wnoise} = 10I_3$};
\end{axis}

\end{tikzpicture}%
\end{adjustbox}
\caption{Empirical mean square bound (msb) in log scale as a function of the successful transmission probability $p$ and the variance of the additive noise.}
\label{Fig:msb}
\end{figure}
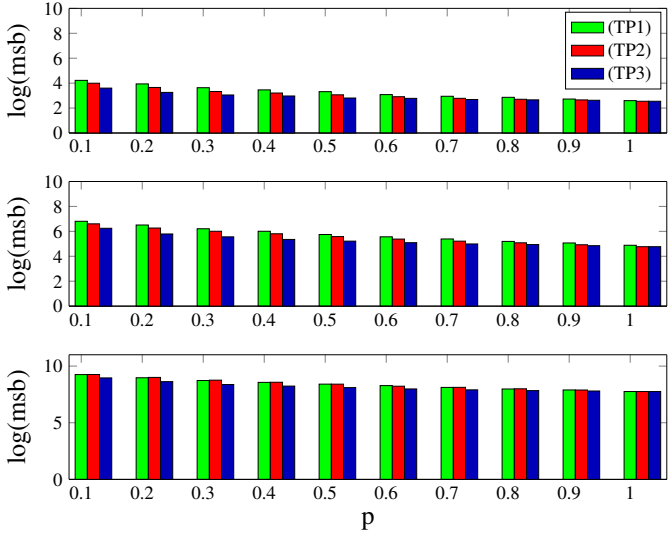	
\subsection{Correlated dropout}\label{s:correlated}
Following \cite{queahl13a}, we model the \emph{communication reliability} 
via an underlying \emph{network state} process $(\Xi_t)_{t\in\Nz}$ taking on values
in a finite set $ \bbB=\{1,2, \dots,|\bbB|\},$ where $|\bbB|$ is the cardinality of $\bbB$. Each $i\in \bbB$ corresponds to a different environmental
condition (such as network congestion or positions of mobile objects). For a given network state  the dropouts are modelled
as i.i.d.\ with probabilities
\begin{equation}
  \label{eq:2}
  p_i=\PP\{\cnoise_t =1 \,|\, \Xi_t = i\},\quad i\in \bbB.
\end{equation}
Correlations can be captured in this model by allowing $(\Xi_t)_{t\in\Nz}$ to be
time-homogeneous Markovian
with transition probabilities:
\begin{equation}
  \label{eq:3}
   p_{ij} = \PP\big\{\Xi_{t+1} = j \, \big| \, \Xi_t = i\big\},\quad \text{ for all } i,j \in
  \bbB \text{ and } \;t\in\Nz.
\end{equation}
\begin{figure}
\centering
\begin{adjustbox}{width=0.8\columnwidth}
\input{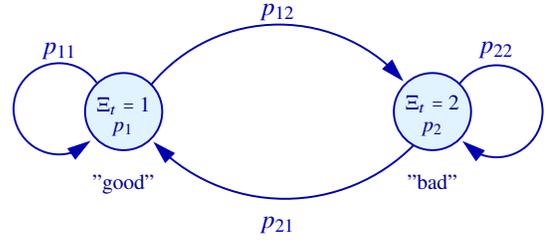}
\end{adjustbox}
\caption{Transmission dropout model with a binary network state
    $(\Xi_t)_{t\in\Nz}$: when $\Xi_t=1$ the channel is reliable with very low
  dropout probabilities; $\Xi_t=2$ refers to a situation where the channel is
  unreliable and transmissions are more likely to be dropped.}
 \vspace{-1em} 
\label{fig:markov_nmpc}
\end{figure} 
Fig.\ref{fig:markov_nmpc} illustrates a simplified version of the model adopted in \cite{queahl13a}.
The \emph{network state} process encompasses Markovian dropouts and
i.i.d.\ dropouts as special cases.  

We considered the packet dropout model in Fig.\
\ref{fig:markov_nmpc}: When the channel is in ``good'' state successful transmission probability $p_1=0.8$, whereas in the ``bad'' state that is $p_2=0.4$. The transition probability to switch from a bad channel state to the good
state is taken as $p_{21}=0.9$; the failure rate is taken as $p_{12}=0.3$. 
\par The plot of average norm of states is presented in Fig. \ref{Fig:normx}. The transmission protocol \ref{a:repetitive} performs better than the rest. All three protocols of SMPC perform better than PPC.
\begin{figure}
\begin{adjustbox}{width=\columnwidth}
\input{normx.tex}
\end{adjustbox}
\caption{Empirical average norm of states: correlated dropout}
\vspace{-1.5em}
\label{Fig:normx}
\end{figure}
The average actuator energy and average cost are compared in Fig.\ref{Fig:ActEnergy} and Fig.\ref{Fig:costperstage}, respectively. Average cost and average actuator energy for \ref{a:seq} and \ref{a:burst} are comparable and less than those of PPC. Transmission protocol \ref{a:repetitive} employs  more actuator energy than \ref{a:seq}, \ref{a:burst} but less than PPC. However, the average cost for \ref{a:repetitive} is the lowest. 

\begin{center}
\begin{minipage}{\columnwidth}
      \centering
      \begin{minipage}{0.46\columnwidth}
          \begin{figure}[H]
            \begin{adjustbox}{width=\columnwidth}
%
%
%
%
\begin{tikzpicture}[yscale = 0.98]

\begin{axis}[%
width=2.40888888888889in,
height=3.47in,
area legend,
scale only axis,
xmin=0.5,
xmax=4.5,
xtick={1,2,3,4},
xticklabels={{\ref{a:seq}},{\ref{a:burst}},{\ref{a:repetitive}},{PPC \cite{Quevedo-12}}},
ymin=0,
ymax=50
]
\addplot[ybar,bar width=0.281777777777778in,draw=black,fill=mycolor4] plot table[row sep=crcr] {%
1	32.8595691010748\\
2	32.2148194801483\\
3	37.3403567359687\\
4	48.5055041941542\\
};
\addplot [color=black,solid,forget plot]
  table[row sep=crcr]{%
0.5	0\\
4.5	0\\
};
\end{axis}
\end{tikzpicture}%
             \end{adjustbox}
              \caption{\small{Empirical average actuator energy: correlated dropout}}
              \label{Fig:ActEnergy}
          \end{figure}
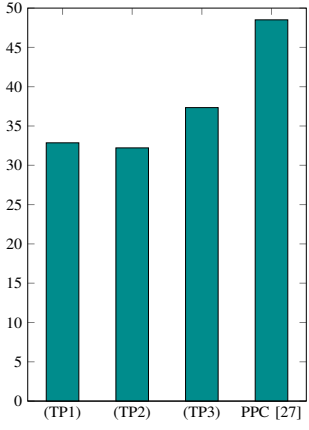
      \end{minipage}
      \quad
      \begin{minipage}{0.46\columnwidth}
          \begin{figure}[H]
          \begin{adjustbox}{width=\columnwidth}
%
%
%
%
\begin{tikzpicture}

\begin{axis}[%
width=2.40888888888889in,
height=3.47733333333333in,
area legend,
scale only axis,
xmin=0.5,
xmax=4.5,
xtick={1,2,3,4},
xticklabels={{\ref{a:seq}},{\ref{a:burst}},{\ref{a:repetitive}},{PPC \cite{Quevedo-12}}},
ymin=0,
ymax=600
]
\addplot[ybar,bar width=0.281777777777778in,draw=black,fill=mycolor4] plot table[row sep=crcr] {%
1	240.768612557164\\
2	234.858521982851\\
3	200.917768868476\\
4	567.959240565643\\
};
\addplot [color=black,solid,forget plot]
  table[row sep=crcr]{%
0.5	0\\
4.5	0\\
};
\end{axis}
\end{tikzpicture}%
              \end{adjustbox}
              \caption{\small{Empirical average cost per stage: correlated dropout}}
              \label{Fig:costperstage}
          \end{figure}
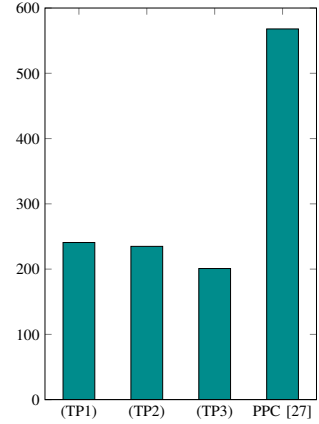
      \end{minipage}
  \end{minipage}
\end{center}  
\section{Conclusion}\label{s:conclusion}
This article presents convex and feasible optimal control formulations for receding horizon control of Lyapunov stable plants in the presence of stochastic noise and control channel erasures. Three transmission protocols are suggested which have different requirements on communication use and computation capability at the actuator. The construction of the optimization programs is systematic.
For  all protocols presented in this article, mean square boundedness of the closed-loop states and satisfaction of hard bound on control is guaranteed and verified by simulation results. In particular, the methods proposed outperform deterministic controllers available in the literature.

\section{Appendix}
\label{s:appendix}
	This appendix provides the proofs of Theorem \ref{th:seq}, \ref{th:burst}, \ref{th:rep} and \ref{th:stability}.
Let us  define the component-wise saturation function  \(\R^{d_o}\ni z\longmapsto \sat_{r, \zeta}^\infty(z)\in\R^{d_o}\) to be 
			\[
				\bigl(\sat_{r, \zeta}^\infty(z)\bigr)^{(i)} = \begin{cases}
					z_i \zeta/r	& \text{if \(\abs{z_i} \le r\),}\\
					\zeta	 	& \text{if \(z_i > r\), and }\\
					-\zeta		& \text{otherwise,}
				\end{cases}
			\]
			for each \(i = 1, \ldots, d_o\).
			\par We need the following basic result:
\begin{theorem}[{\cite[Theorem 2.1]{ref:PemRos-99}}]
		\label{t:PemRos-99}
			Let \((X_t)_{t\in\Nz}\) be a family of real valued random variables on a probability space \((\Omega, \sigalg, \PP)\), adapted to a filtration \((\sigalg_t)_{t\in\Nz}\). Suppose that there exist scalars \(J, M, a > 0\) such that \(X_0 < J\), and 
			\[
				\EE^{\sigalg_t}[X_{t+1} - X_t] \le -a \quad\text{whenever } X_t > J,
			\]
			and
			\[
				\EE\bigl[\abs{X_{t+1} - X_t}^4\,\big|\, X_0, \ldots, X_t\bigr] \le M\quad\text{for all }t\in\Nz.
			\]
			Then there exists a constant \(C > 0\) such that \(\sup_{t\in\Nz}\EE\bigl[ \left((X_t)_+\right)^2 \bigr] \le C\).
		\end{theorem}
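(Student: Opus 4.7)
The plan is to establish the result via a Foster--Lyapunov style argument with the Lyapunov function $V(x) \Let (x_+)^2$, combined with the fourth-moment bound on the increments $D_t \Let X_{t+1}-X_t$ to control the behavior of the process in the regime where the drift hypothesis is silent. First I would note that the bounded fourth moment of $D_t$ implies, via Jensen's inequality, the bound $\EE[D_t^2 \mid \sigalg_t] \le \sqrt{M}$, and via Markov's inequality, the tail estimate $\PP(|D_t| > L \mid \sigalg_t) \le M/L^4$.

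The core drift computation splits into two regimes. On the event $\{X_t > J\}$, write $V(X_{t+1}) \le X_{t+1}^2 = X_t^2 + 2 X_t D_t + D_t^2$, take conditional expectations, and invoke the hypothesis $\EE[D_t\mid \sigalg_t]\le -a$ to obtain
\[
\EE\bigl[V(X_{t+1}) - V(X_t) \,\big|\, \sigalg_t\bigr] \;\le\; -2 a X_t + \sqrt{M},
\]
which is strictly less than $-a \sqrt{V(X_t)}$ once $V(X_t)$ exceeds a threshold $K_1$ depending only on $a$, $J$, and $M$. On the event $\{X_t \le J\}$ the drift hypothesis gives no information, so here I would argue directly: for $X_t \in [-1,J]$ the increment bound yields $\EE[V(X_{t+1})\mid \sigalg_t] \le 2X_t^2 + 2\sqrt{M} \le 2(1+J)^2+2\sqrt{M}$; for $X_t < -1$ one has $(X_t + D_t)_+ \le D_t \cdot \mathbf{1}\{D_t > |X_t|\}$, and Cauchy--Schwarz together with the tail estimate gives $\EE[V(X_{t+1}) \mid \sigalg_t] \le \sqrt{M}\cdot \sqrt{M/X_t^4} = M/X_t^2 \le M$. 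Combining the three subcases yields a uniform bound $\EE[V(X_{t+1})\mid \sigalg_t] \le K_2$ whenever $X_t \le J$.

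Putting the two regimes together, one obtains a global inequality of the form
\[
\EE[V(X_{t+1})\mid\sigalg_t] \;\le\; V(X_t) - \phi(V(X_t)) + K_3,
\]
where $\phi$ is a nonnegative function satisfying $\phi(v) \ge a\sqrt{v}$ for $v \ge K_1$. Taking unconditional expectations and summing telescopically I would then show by a standard Foster--Lyapunov iteration that the sequence $\EE[V(X_t)]$ cannot grow: whenever $\EE[V(X_t)]$ exceeds $(K_3/a)^2$, Jensen's inequality applied to the concave map $v\mapsto \sqrt{v}$ forces a strictly negative one-step increment in $\EE[V(\cdot)]$, hence the sequence is trapped in an interval depending only on $a,J,M$ and $V(X_0)$.

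The main obstacle I anticipate is the delicate control of the regime $X_t\le J$ when $X_t$ is very negative, because in that regime nothing is assumed about the sign of $\EE[D_t\mid\sigalg_t]$. The resolution sketched above --- using the fourth-moment tail bound on $D_t$ to show that $(X_{t+1})_+$ is necessarily small in mean-square whenever $|X_t|$ is large --- is the crux of the argument. A second subtlety is that the drift $\phi$ is only sublinear in $V$ (a \emph{polynomial} rather than \emph{geometric} drift), so a pure contraction argument fails; one must either work with $\sqrt{V}$ directly or iterate the inequality carefully to extract the uniform bound $\sup_t \EE[V(X_t)] \le C$.
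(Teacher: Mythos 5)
A preliminary remark: the paper does not prove this statement at all --- it is imported verbatim as \cite[Theorem 2.1]{ref:PemRos-99} --- so there is no internal proof to compare against, and your attempt must be judged on its own terms. Your regime decomposition and the individual one-step estimates are correct as far as they go (the Cauchy--Schwarz treatment of the case \(X_t<-1\), using the tail bound \(\PP(D_t>\abs{X_t}\mid\sigalg_t)\le M/X_t^4\), is a sound way to handle unboundedness from below). The gap is in the concluding iteration.

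To pass from the conditional inequality \(\EE[V(X_{t+1})\mid\sigalg_t]\le V(X_t)-2a\sqrt{V(X_t)}+K_3\) on \(\{X_t>J\}\) to the scalar recursion \(v_{t+1}\le v_t-a\sqrt{v_t}+K_3\) for \(v_t\Let\EE[V(X_t)]\), you need \(\EE[\sqrt{V(X_t)}]\ge\sqrt{\EE[V(X_t)]}\). Jensen's inequality for the concave square root gives exactly the opposite direction, and the inequality you need is false in general (take \(V=n^2\) with probability \(1/n\) and \(0\) otherwise: \(\EE[\sqrt V]=1\) while \(\sqrt{\EE[V]}=\sqrt n\)). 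What a sublinear drift of order \(-a\sqrt{V}\) actually buys, via the standard comparison/telescoping argument, is \(\sup_t\EE[\sqrt{V(X_t)}]=\sup_t\EE[(X_t)_+]<+\infty\) --- one moment short of the claim. The defect is structural, not cosmetic: in the regime \(X_t>J\) your estimate uses only \(\EE[D_t^2\mid\sigalg_t]\le\sqrt M\), i.e.\ a second-moment bound on the increments, and the theorem is false under a mere second-moment hypothesis (Pemantle and Rosenthal prove the exponent \(p-2\) in ``\(p\)-th moments of increments yield \((p-2)\)-th moments of \(X^+\)'' is sharp; a process that makes a rare upward jump of height \(h\) with probability of order \(M/h^2\) and then drifts down at rate \(a\) spends of order \(h/a\) steps above \(h/2\), which makes \(\sup_t\EE[((X_t)_+)^2]\) infinite). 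Any correct proof must therefore use the fourth moment to control the excursions \emph{above} \(J\), not merely the negative tail. The cited proof does this by decomposing \(\{X_n>\lambda\}\) according to the last time the process was at or below \(J\), noting that \(X_k+ak\) is a supermartingale along each excursion, and applying an \(L^4\) maximal inequality excursion by excursion; the fourth moment is what makes the resulting sum over excursion starting times converge and delivers the uniform bound on \(\EE[((X_t)_+)^2]\).
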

\begin{lemma}\label{t:msbsingle}
Consider the orthogonal part of the system \eqref{e:system} given by
\begin{equation}
			\label{e:ortho system}
				\stortho_{t+1} = \Aortho \stortho_t + \Bortho \control^a_t + \wnoise_t^o
			\end{equation}			
as per the decomposition \eqref{e:orthogonal decomposition} above. Suppose the pair $(\Aortho,\Bortho)$ is $\reachindex-$step reachable. Let $U_{\max} > 0$ be given such that $\norm{\control_t}_{\infty} \leq U_{\max}$ for all $t$. Then there exists a \(\reachindex\)-history dependent policy under the transmission protocol  \ref{a:seq} that renders the system \eqref{e:ortho system} mean-square bounded.
		\end{lemma}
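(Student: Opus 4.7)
The plan is to construct an open-loop (over each recalculation block) $\reachindex$-history dependent policy for $\stortho$ and to apply Theorem \ref{t:PemRos-99} component-wise to the sequence $Y_t \Let (\Aortho^{\reachindex})\transp \stortho_{t\reachindex}$, $t \in \Nz$. First, I would iterate \eqref{e:ortho system} over a block of length $\reachindex$ starting at time $t\reachindex$:
\begin{equation*}
\stortho_{(t+1)\reachindex} = \Aortho^{\reachindex}\stortho_{t\reachindex} + \reachab_{\reachindex}(\Aortho,\Bortho)\control^a_{t\reachindex:\reachindex} + \sum_{i=0}^{\reachindex-1}\Aortho^{\reachindex-1-i}\wnoise^{o}_{t\reachindex+i}.
\end{equation*}
Left-multiplying by $(\Aortho^{\reachindex})\transp$ and using orthogonality of $\Aortho$ gives
\begin{equation*}
Y_{t+1} = Y_t + (\Aortho^{\reachindex})\transp\reachab_{\reachindex}(\Aortho,\Bortho)\control^a_{t\reachindex:\reachindex} + \xi_t,
\end{equation*}
with $\xi_t \Let (\Aortho^{\reachindex})\transp\sum_{i=0}^{\reachindex-1}\Aortho^{\reachindex-1-i}\wnoise^{o}_{t\reachindex+i}$ zero-mean, independent of $\mathcal{F}_t \Let \sigma(\stortho_0,\cnoise_{0:t\reachindex},\wnoise_{0:t\reachindex})$, and having bounded fourth moment since $\wnoise_t^o$ does.

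Next I would specify the control. Pick any $\zeta \in \bigl]0,\, U_{\max}/(\sqrt{d_o}\,\sigma_1(\reachab_{\reachindex}(\Aortho,\Bortho)^{+}))\bigr[$. At each block start $t\reachindex$, define the target drift vector $v_t \in \R^{d_o}$ componentwise by $v_t^{(j)} = -\zeta\,\mathrm{sign}(Y_t^{(j)})$ when $|Y_t^{(j)}|>r+\epsilon$, and $v_t^{(j)}=0$ otherwise, and set
\begin{equation*}
\control_{t\reachindex:\reachindex} \Let \tfrac{1}{p}\,\reachab_{\reachindex}(\Aortho,\Bortho)^{+}\Aortho^{\reachindex} v_t.
\end{equation*}
By the definition of $\zeta$ and the bound $\|v_t\|\le \sqrt{d_o}\zeta$, the hard-infinity-norm constraint $\|\control_{t\reachindex+i}\|_\infty \le U_{\max}$ is met coordinatewise for $i=0,\dots,\reachindex-1$, provided $p$ is absorbed without increasing the bound; a marginally smaller $\zeta$ (still positive) restores the inequality if needed — this is the only place the strictness of the interval defining $\zeta$ is used. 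Under protocol \ref{a:seq}, $\control^a_{t\reachindex+i} = \cnoise_{t\reachindex+i}\control_{t\reachindex+i}$ with $(\cnoise_{t\reachindex+i})_i$ i.i.d.\ Bernoulli$(p)$ independent of the chosen $\control_{t\reachindex:\reachindex}$. A short calculation therefore gives $\EE[\,(\Aortho^{\reachindex})\transp\reachab_{\reachindex}(\Aortho,\Bortho)\control^a_{t\reachindex:\reachindex}\,\mid \mathcal{F}_t] = v_t$, so that on $\{Y_t^{(j)} > r+\epsilon\}$
\begin{equation*}
\EE\bigl[Y_{t+1}^{(j)} - Y_t^{(j)} \,\big|\, \mathcal{F}_t\bigr] = v_t^{(j)} = -\zeta,
\end{equation*}
and symmetrically $\EE[Y_{t+1}^{(j)} - Y_t^{(j)}\mid\mathcal{F}_t] = \zeta$ on $\{Y_t^{(j)} < -r-\epsilon\}$.

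Finally, I would apply Theorem \ref{t:PemRos-99} to each coordinate. For the positive-side bound, take $X_t = Y_t^{(j)}$ with $J = r+\epsilon$ and $a = \zeta$; the fourth-moment hypothesis on the one-step increments holds because $\control^a_{t\reachindex+i}$ is bounded in infinity norm by $U_{\max}$ and $\wnoise_t^o$ has bounded fourth moment, yielding $\EE[|Y_{t+1}-Y_t|^4\mid\mathcal{F}_t]\le M$ for some $M>0$. The theorem delivers $\sup_t \EE[((Y_t^{(j)})_+)^2] < \infty$; applying it to $-Y_t^{(j)}$ analogously controls $(Y_t^{(j)})_-$, and summing over $j$ gives $\sup_t \EE\|Y_t\|^2 < \infty$. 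Since $\Aortho$ is orthogonal, $\|\stortho_{t\reachindex}\| = \|Y_t\|$, and over the intermediate steps in each block $\stortho_{t\reachindex+i}$ differs from $\stortho_{t\reachindex}$ by terms bounded in mean square, establishing mean-square boundedness of the full sequence $(\stortho_t)_{t\in\Nz}$.

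The main obstacle I anticipate is the careful bookkeeping for the dropout-induced factor $1/p$ in the definition of $\control_{t\reachindex:\reachindex}$ while simultaneously keeping each coordinate of $\control$ within $[-U_{\max},U_{\max}]$: the bound $\zeta < U_{\max}/(\sqrt{d_o}\sigma_1(\reachab_{\reachindex}(\Aortho,\Bortho)^{+}))$ must be tightened by the factor $p$, but since $p>0$ one can always pick such a $\zeta$; the rest of the argument then reduces to an application of Pemantle--Rosenthal.
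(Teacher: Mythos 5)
Your overall strategy is the paper's: subsample by $\reachindex$, rotate coordinates, drive each coordinate with a saturated control built from $\reachab_{\reachindex}(\Aortho,\Bortho)^{+}$, and invoke Theorem \ref{t:PemRos-99} componentwise, then recover the intermediate states. However, one step as written fails: the recursion for $Y_t$. With the \emph{fixed} rotation $Y_t \Let (\Aortho^{\reachindex})\transp\stortho_{t\reachindex}$, left-multiplying the block update by $(\Aortho^{\reachindex})\transp$ gives $Y_{t+1} = (\Aortho^{\reachindex})\transp\Aortho^{\reachindex}\stortho_{t\reachindex} + (\Aortho^{\reachindex})\transp\reachab_{\reachindex}(\Aortho,\Bortho)\control^a_{t\reachindex:\reachindex} + \xi_t = \stortho_{t\reachindex} + (\Aortho^{\reachindex})\transp\reachab_{\reachindex}(\Aortho,\Bortho)\control^a_{t\reachindex:\reachindex} + \xi_t$, and $\stortho_{t\reachindex} \neq Y_t$ unless $\Aortho^{\reachindex} = I_{d_o}$. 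So the telescoping identity $Y_{t+1} = Y_t + (\cdot)$ on which your whole drift argument rests does not hold, and the increments $Y_{t+1}-Y_t$ contain the uncontrolled term $(I_{d_o}-(\Aortho^{\reachindex})\transp)\stortho_{t\reachindex}$, which is not small. The repair is to use the \emph{time-varying} rotation $y_{\reachindex t} \Let (\Aortho\transp)^{\reachindex t}\stortho_{\reachindex t}$, for which $(\Aortho\transp)^{\reachindex(t+1)}\Aortho^{\reachindex}\stortho_{\reachindex t} = y_{\reachindex t}$ exactly, and to insert $\Aortho^{\reachindex(t+1)}$ (rather than $\Aortho^{\reachindex}$) into the control so that the rotation cancels in the drift term; this is precisely what the paper does, and norms are still preserved since $(\Aortho\transp)^{\reachindex t}$ is orthogonal for every $t$.

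The remaining ingredients of your proposal are sound and match the paper's proof modulo one legitimate variation: you rescale the control by $1/p$ to obtain drift exactly $\mp\zeta$ at the cost of shrinking the admissible interval for $\zeta$ by the factor $p$, whereas the paper keeps the unscaled saturated control $\control_{\reachindex t:\reachindex} = -\reachab_{\reachindex}(\Aortho,\Bortho)^{+}\Aortho^{\reachindex(t+1)}\sat^{\infty}_{r,\zeta}\bigl((\Aortho\transp)^{\reachindex t}\stortho_{\reachindex t}\bigr)$ and accepts the weaker drift $\mp p\zeta$, which still satisfies the hypotheses of Theorem \ref{t:PemRos-99} with $a = p\zeta > 0$. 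Since the lemma only asserts existence of a stabilizing policy, either choice is acceptable, though the paper's version has the advantage of being feasible for the stated constraint interval on $\zeta$ without the extra factor of $p$. Your treatment of the fourth-moment condition and of passing from the subsampled to the full process is the same as the paper's.
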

		\begin{proof}
Consider the \(\reachindex\)-subsampled system \eqref{e:ortho system}:
			\begin{align}
				\stortho_{\reachindex (t+1)}	& = \Aortho^\reachindex \stortho_{\reachindex t} + \reachab_{\reachindex}(\Aortho, \Bortho) \pmat{\cnoise_{\reachindex t}\control_{\reachindex t}\\ \vdots \\ \cnoise_{\reachindex(t+1)-1}\control_{\reachindex(t+1) - 1}}\nn\\
					& \quad + \reachab_{\reachindex}(\Aortho, I_{d_o}) \pmat{\wnoise^o_{\reachindex t}\\ \vdots \\ \wnoise^o_{\reachindex (t+1) - 1}}\nn\\
					& \teL \Aortho^\reachindex \stortho_{\reachindex t} + \reachab_\reachindex(\Aortho, \Bortho) \control^a_{\reachindex t:\reachindex}\\
					& \qquad + \reachab_\reachindex(\Aortho, I_{d_o}) \wnoise^o_{\reachindex t:\reachindex}.\nn
			\label{e:subsampled system}
			\end{align}
			Define \(y_{\reachindex t} \Let (\Aortho\transp)^{\reachindex t} \stortho_{\reachindex t}\) for each \(t\in\Nz\). It follows that
\begin{equation}\label{eq:tildeukt}			
			\begin{aligned} 
				y_{\reachindex(t+1)} & = y_{\reachindex t} + (\Aortho\transp)^{\reachindex(t+1)} \reachab_\reachindex(\Aortho, \Bortho) \control^a_{\reachindex t:\reachindex}\\ 
					& \qquad + (\Aortho\transp)^{\reachindex(t+1)} \reachab_\reachindex(\Aortho, I_{d_o}) \wnoise^o_{\reachindex t:\reachindex}\\ 
&=y_{\reachindex t} + \tilde{\control}_{\reachindex t} + (\Aortho\transp)^{\reachindex(t+1)} \reachab_\reachindex(\Aortho, I_{d_o}) \wnoise^o_{\reachindex t:\reachindex},
			\end{aligned}
			\end{equation}
			where, $ \tilde{\control}_{\reachindex t} = (\Aortho\transp)^{\reachindex(t+1)} \reachab_\reachindex(\Aortho, \Bortho) \pmat{\cnoise_{\reachindex t}\control_{\reachindex t}\\ \cnoise_{\reachindex t+1}\control_{\reachindex t+1}\\ \vdots \\ \cnoise_{\reachindex t+\reachindex-1}\control_{\reachindex t+\reachindex-1} }. $
			Let \(\sigalg_{\reachindex t}\) denote the \(\sigma\)-algebra generated by \(\{\st_{\reachindex \ell}\,|\, \ell = 0, 1, \ldots, t\}\). From \eqref{eq:tildeukt}, we have
	\[ \EE^{\sigalg_{\reachindex t}}\bigl[ y_{\reachindex (t+1)} - y_{\reachindex t}\bigr] = \EE^{\sigalg_{\reachindex t}} [\tilde{\control}_{\reachindex t} ]. \]
			We claim that the sequence
			\begin{align}\label{e:suggested}				
				\control_{\reachindex t:\reachindex} = - \reachab_{\reachindex}(\Aortho, \Bortho)^+ \Aortho^{\reachindex (t+1)} \sat_{r, \zeta}^\infty \bigl((\Aortho\transp)^{\reachindex t} \stortho_{\reachindex t}\bigr).
			\end{align}
is feasible for the problem \eqref{e:programsingle}.
Let us verify the hypotheses of Theorem \ref{t:PemRos-99}. 			For the \(i\)-th component \(y^{(i)}_{\reachindex t}\) of \(y_{\reachindex t}\) we see that 
			\[
				\EE^{\sigalg_{\reachindex t}}\bigl[ y^{(i)}_{\reachindex (t+1)} - y^{(i)}_{\reachindex t}\bigr] = -p\zeta \quad\text{whenever }y^{(i)}_{\reachindex t} > r,
			\]
			and similarly
			\[
				\EE^{\sigalg_{\reachindex t}}\bigl[ y^{(i)}_{\reachindex (t+1)} - y^{(i)}_{\reachindex t} \bigr] = p\zeta \quad\text{whenever }y^{(i)}_{\reachindex t} < -r.
			\]
			A straightforward computation relying on uniform boundedness of the control shows that there exists \(M > 0\) such that
			\[
				\EE\Bigl[\abs{y^{(i)}_{\reachindex (t+1)} - y^{(i)}_{\reachindex t}}^4\,\Big|\, y^{(i)}_{\reachindex t}, \ldots, y^{(i)}_0\Bigr] \le M\quad\text{for all }t.
			\]
			Theorem \ref{t:PemRos-99} now guarantees the existence of constants \(C^{(i)}_+, C^{(i)}_- > 0\), \(i = 1, \ldots, d\), such that
			\[
				\sup_{t\in\Nz} \EE_{\stinit}\Bigl[ \left( \bigl(y^{(i)}_{\reachindex t}\bigr)_+ \right)^2\Bigr] \le C^{(i)}_+\quad\text{and}\quad \sup_{t\in\Nz} \EE_{\stinit}\Bigl[ \left( \bigl(y^{(i)}_{\reachindex t}\bigr)_-\right)^2\Bigr] \le C^{(i)}_-.
			\]
			Since \(\abs{y} = y_+ + y_- = y_+ + (-y)_+\) for any \(y\in\R\), and also \(\norm{y}^2 = \sum_{i=1}^{d_o} \abs{y^{(i)}}^2 \le 2 \sum_{i=1}^{d_o} \bigl((y^{(i)}_+)^2 + (y^{(i)}_-)^2\bigr)\), we see at once that the preceding bounds imply
			\[
				\sup_{t\in\Nz} \EE_{\stinit}\bigl[\norm{y_{\reachindex t}}^2\bigr] < C\quad\text{for some constant \(C > 0\)}.
			\]
			Since \(\stortho_t\) is derived from \(y_t\) by an orthogonal transformation, it immediately follows that
			\[
				\sup_{t\in\Nz} \EE_{\stinit}\bigl[\norm{\stortho_{\reachindex t}}^2\bigr] \le C.
			\]
			A standard argument (e.g., as in \cite{ref:RamChaMilHokLyg-10}) now suffices to conclude from mean-square boundedness of the \(\reachindex\)-subsampled process \((\stortho_{\reachindex t})_{t\in\Nz}\) the same property of the original process \((\stortho_{t})_{t\in\Nz}\).
		\end{proof}
\begin{lemma}\label{t:objectivesingle}
Consider the system \eqref{e:system}, for every $t=0, \reachindex, 2\reachindex, \cdots,$ the problem \eqref{e:opt problem} under policy \eqref{e:policy}, transmission protocol \ref{a:seq} and control set \eqref{e:controlset} is convex quadratic with respect to the decision variable $(\offset_t,\gain_t)$. The objective function \eqref{e:seq opt control problem} is given by \eqref{e:programsingle}.
\end{lemma}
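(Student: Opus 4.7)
The plan is to substitute the compact system description \eqref{e:augmented} and the control parametrization \eqref{e:policyseq} into the cost \eqref{e:opt problem}, and then to expand the resulting quadratic form in the decision variable $(\offset_t,\gain_t)$ term by term, exploiting the independence and zero-mean structure established in Assumption \ref{a:blanket}. Writing $\st_{t:N+1} = \calA\st_t + \calB\control_{t:N}^a + \calD \wnoise_{t:N}$ with $\control_{t:N}^a = \calS\offset_t + \calS\gain_t \ee(\wnoise_{t:N-1})$, the cost breaks into three groups: a purely quadratic term in $\st_{t:N+1}$, a purely quadratic term in $\control^a_{t:N}$, and a cross term $\st_{t:N+1}\transp \calQ\calD \wnoise_{t:N}$ that appears when the state dynamics is expanded.

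First I would compute $\EE_{\st_t}[\st_{t:N+1}\transp \calQ \st_{t:N+1}]$. All cross terms involving $\EE[\wnoise_{t:N}] = 0$ drop out; the $\wnoise$-only quadratic contributes $\trace(\calD\transp\calQ\calD\,\Sigma_W)$; the $\st_t$-quadratic contributes $\st_t\transp\calA\transp\calQ\calA\st_t$. Together these give $c_t$. The remaining terms are $2\st_t\transp\calA\transp\calQ\calB\,\EE[\control^a_{t:N}]$, $\EE[(\control^a_{t:N})\transp \calB\transp\calQ\calB\, \control^a_{t:N}]$, and $2\,\EE[(\control^a_{t:N})\transp \calB\transp\calQ\calD\,\wnoise_{t:N}]$. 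Adding the control cost amounts to replacing $\calB\transp\calQ\calB$ by $\calB\transp\calQ\calB + \calR$ in the second of these. Using independence of $\calS$ from $(\wnoise,\ee)$ and $\EE[\ee]=0$, one gets $\EE[\control^a_{t:N}] = \mu_{\calS}\offset_t$, which yields the linear-in-$\offset_t$ term $2\st_t\transp\calA\transp\calQ\calB\mu_{\calS}\offset_t$. The quadratic-in-$\control^a$ term, after conditioning on $\ee$ and taking expectation over $\calS$, and using $\EE[\ee\ee\transp]=\Sigma_\ee$, becomes $\offset_t\transp\Sigma_\calS\offset_t + \trace(\gain_t\transp\Sigma_\calS\gain_t\Sigma_\ee)$ (the cross term in $\offset_t,\gain_t$ vanishes because $\EE[\ee]=0$). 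For the $\control^a$-$\wnoise$ cross term, only the $\gain_t\ee$ piece survives (since $\EE[\wnoise]=0$); independence of $\calS$ yields $2\,\trace(\gain_t\transp\mu_\calS\transp\calB\transp\calQ\calD\,\Sigma_\ee')$. Summing these contributions reproduces \eqref{e:programsingle} exactly.

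Next I would verify convexity and admissibility. Since $\calR$ is positive definite and $\calQ$ positive semi-definite, the matrix $\calB\transp\calQ\calB+\calR$ is positive semi-definite, hence $\Sigma_\calS = \EE[\calS\transp(\calB\transp\calQ\calB+\calR)\calS]$ is positive semi-definite; $\Sigma_\ee$ is a covariance matrix, hence also positive semi-definite. Thus the Hessian in $\offset_t$ equals $2\Sigma_\calS \succeq 0$, while the Hessian with respect to $\vect(\gain_t)$ equals $2(\Sigma_\ee \otimes \Sigma_\calS) \succeq 0$, so the objective is convex quadratic jointly in $(\offset_t,\gain_t)$. The hard bound $\control_{t+\ell}\in\controlset$ on each transmitted component reduces, via \eqref{e:controlt} and the uniform bound $\norm{\ee(\wnoise_{t:N-1})}_\infty\le\varphi_{\max}$, to the row-wise linear inequalities in \eqref{e:decisionboundsingle} by the triangle and H\"older inequalities: $|\control_{t+\ell}^{(i)}| \le |\offset_t^{(i)}| + \norm{\gain_t^{(i)}}_1 \varphi_{\max}$. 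Structural zeros of $\gain_t$ in \eqref{e:gainStructsingle} encode the causality of feedback. Feasibility is immediate since $(\offset_t,\gain_t)=(0,0)$ satisfies the constraints.

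The main bookkeeping obstacle is keeping track of which factors are independent and ensuring that the mixed $\offset_t$--$\gain_t$ terms really vanish; this hinges on $\EE[\ee]=0$, which in turn relies on the symmetry part of Assumption~\ref{a:blanket} together with the symmetry of the scalar saturation maps $\ee_i$. Once independence is used to factor $\EE[\calS\,(\cdot)]$ from $\EE_\wnoise[\cdot]$, the remaining manipulation is trace cyclicity and the identity $\trace(\Sigma_\calS\gain_t\Sigma_\ee\gain_t\transp) = \trace(\gain_t\transp\Sigma_\calS\gain_t\Sigma_\ee)$ to present the objective in the form \eqref{e:programsingle}.
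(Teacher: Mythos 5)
Your proposal is correct and follows essentially the same route as the paper: substitute \eqref{e:augmented} and \eqref{e:policyseq} into the cost, expand the quadratic form, and use the zero-mean noise, \(\EE[\ee(\wnoise_{t:N-1})]=0\), and independence of the dropout process from the plant noise to kill the cross terms, collecting what remains into \eqref{e:programsingle}. The only cosmetic difference is that the paper establishes convexity \emph{before} expansion (the integrand is a convex quadratic composed with maps affine in \((\offset_t,\gain_t)\), and expectation preserves this), whereas you verify it afterwards via positive semidefiniteness of \(\Sigma_{\calS}\) and \(\Sigma_{\ee}\); both arguments are valid, and your additional remarks on the constraints \eqref{e:gainStructsingle}--\eqref{e:decisionboundsingle} belong to the proof of Theorem \ref{th:seq} rather than this lemma.
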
	
\begin{proof}
It is clear from the construction that $\calQ$ is symmetric and non-negative definite and $\calR$ is symmetric and positive definite. Hence, $\st_{t:N+1}\transp\calQ\st_{t:N+1}+\control^a_{t:N}\calR \control^a_{t:N}$ is convex quadratic. Both $\st_{t:N+1}$ and $\control^a_{t:N}$ are affine function of the design parameters $(\offset_t,\gain_t)$ for any realization of noise $\wnoise_{t:N}$. It directly follows that $V_t$ is convex quadratic in $(\offset_t,\gain_t)$. The control constraint set in \eqref{e:controlset} is convex affine in $(\offset_t,\gain_t)$. Hence the given problem \eqref{e:opt problem} is a convex quadratic program. 
The objective function of \eqref{e:seq opt control problem} is given by
\begin{align*}
&\EE_{\st_{t}}\bigl[ \inprod{\st_{t:N+1}}{\calQ \st_{t:N+1}} + \inprod{\control^a_{t:N}}{\calR\control^a_{t:N}}\bigr]\\
&=\EE_{\st_{t}}\Bigl[ 
\Bigl(\calA \st_t + \calB \control^a_{t:N} + \calD \wnoise_{t:N}\Bigr) \transp \calQ \Bigl( \calA \st_t  + \calB \control^a_{t:N} \\ & \quad + \calD \wnoise_{t:N}\Bigr) 
+ (\control^a_{t:N})\transp \calR\control^a_{t:N} \Bigr]\\
&=\EE_{\st_{t}}\Bigl[
\st_t\transp\calA\transp\calQ\calA\st_t + (\control^a_{t:N})\transp(\calB\transp\calQ\calB+\calR)\control^a_{t:N} \\ & \quad + \wnoise_{t:N}\transp\calD\transp\calQ\calD\wnoise_{t:N} + 2\bigl( \st_t\transp\calA\transp\calQ\calB \control^a_{t:N} + \st_t\transp\calA\transp\calQ\calD \wnoise_{t:N} \\ & \quad + (\control^a_{t:N})\transp\calB\transp\calQ\calD \wnoise_{t:N}    \bigr) \Bigr]\\
&=\EE_{\st_{t}}\bigl[ \st_t\transp\calA\transp\calQ\calA\st_t + \wnoise_{t:N}\transp\calD\transp\calQ\calD\wnoise_{t:N} + 2\st_t\transp\calA\transp\calQ\calD\wnoise_{t:N}\bigr] \\ & \quad +2\EE_{\st_{t}}\bigl[ (\calS\offset_t + \calS\gain_t\ee(\wnoise_{t:N-1}))\transp\calB\transp\calQ\calD\wnoise_{t:N}
\bigr]\\ & \quad +\EE_{\st_{t}}\bigl[ 2\st_t\transp\calA\transp\calQ\calB(\calS\offset_t + \calS\gain_t\ee(\wnoise_{t:N-1}))
\bigr] +\EE_{\st_{t}}\bigl[ (\calS\offset_t \\ & \quad +\calS\gain_t\ee(\wnoise_{t:N-1}))\transp(\calB\transp\calQ\calB+\calR)(\calS\offset_t + \calS\gain_t\ee(\wnoise_{t:N-1}))
\bigr]\\
&=\st_t\transp\calA\transp\calQ\calA\st_t+\trace(\calD\transp\calQ\calD\Sigma_W) 
+2(0) + 0 \\ & \quad + 2\EE_{\st_{t}}\bigl[ (\calS\gain_t\ee(\wnoise_{t:N-1}))\transp\calB\transp\calQ\calD\wnoise_{t:N} \bigr] \\ & \quad +2\st_t\transp\calA\transp\calQ\calB\EE_{\st_{t}}\bigl[ \calS\bigr] \offset_t +0 + \EE_{\st_{t}}\bigl[ (\calS\offset_t)\transp(\calB\transp\calQ\calB+\calR)\calS\offset_t \\ & \quad + (\calS\gain_t\ee(\wnoise_{t:N-1}))\transp(\calB\transp\calQ\calB+\calR)(\calS\gain_t\ee(\wnoise_{t:N-1})) 
\bigr]+0\\
&= 2\trace(\gain_t\transp\mu_{\calS}\calB\transp\calQ\calD\Sigma_{\ee}^{\prime}) + 2\st_t\transp\calA\transp\calQ\calB\mu_{\calS}\offset_t  + \trace(\offset\transp\EE[\calS\transp(\calB\transp\calQ\calB \\ & \quad +\calR)\calS]\offset_t)+
\trace((\gain_t\transp\EE[\calS\transp(\calB\transp\calQ\calB+\calR)\calS]\gain_t\Sigma_{\ee}) + c_t
\\ 
&= c_t + 2\trace(\gain_t\transp\mu_{\calS}\transp\calB\transp\calQ\calD\Sigma_{\ee}^{\prime}) + 2\st_t\transp\calA\transp\calQ\calB\mu_{\calS}\offset_t  + \trace(\offset\transp\Sigma_{\calS}\offset_t) \\ & \quad +
\trace((\gain_t\transp\Sigma_{\calS}\gain_t\Sigma_{\ee})
\end{align*}		
\end{proof}			
\begin{lemma}\label{t:msb}
Consider the orthogonal part of the system \eqref{e:system} as per the decomposition \eqref{e:orthogonal decomposition} above given by \eqref{e:ortho system}. Let \(\reachindex\) denote the reachability index of the pair \((\Aortho,\Bortho)\). Let $U_{\max} > 0$ be given such that $\norm{\control_t}_{\infty} \leq U_{\max}$ for all $t$. Then there exists a \(\reachindex\)-history dependent policy under the transmission protocol \ref{a:burst} that renders the system \eqref{e:ortho system} mean-square bounded.
		\end{lemma}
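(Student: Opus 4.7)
The plan is to mirror the argument used for Lemma \ref{t:msbsingle}, with minor modifications reflecting that under transmission protocol \ref{a:burst} the entire offset burst $(\offset_t)_{1:m\reachindex}$ is subject to a single common Bernoulli erasure $\cnoise_{\reachindex t}$, rather than coordinate-wise erasures as in \ref{a:seq}. We focus on the $\reachindex$-subsampled orthogonal subsystem and exhibit a feasible $\reachindex$-history dependent policy for which Theorem \ref{t:PemRos-99} applies component-wise.

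First, we set the feedback gain $\gain_t = 0$ in \eqref{e:policyburst}, so that only the offset acts on the plant. With this choice, the $\reachindex$-subsampled system derived from \eqref{e:ortho system} takes the form
\begin{align*}
\stortho_{\reachindex(t+1)} &= \Aortho^{\reachindex} \stortho_{\reachindex t} + \cnoise_{\reachindex t} \reachab_\reachindex(\Aortho, \Bortho) (\offset_{\reachindex t})_{1:m\reachindex} \\
&\quad + \reachab_\reachindex(\Aortho, I_{d_o}) \wnoise^o_{\reachindex t:\reachindex},
\end{align*}
because under \ref{a:burst} the first $m\reachindex$ blocks of $\control^a_{\reachindex t:N}$ collapse to $\cnoise_{\reachindex t}(\offset_{\reachindex t})_{1:m\reachindex}$ when $\gain_t = 0$. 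Define $y_{\reachindex t} \Let (\Aortho\transp)^{\reachindex t}\stortho_{\reachindex t}$, which satisfies
\[
y_{\reachindex(t+1)} - y_{\reachindex t} = \cnoise_{\reachindex t}(\Aortho\transp)^{\reachindex(t+1)}\reachab_\reachindex(\Aortho, \Bortho)(\offset_{\reachindex t})_{1:m\reachindex} + \xi_t,
\]
where $\xi_t \Let (\Aortho\transp)^{\reachindex(t+1)} \reachab_\reachindex(\Aortho, I_{d_o}) \wnoise^o_{\reachindex t:\reachindex}$ is zero mean and, by Assumption \ref{a:blanket}, has bounded fourth moment.

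The candidate policy is taken identical to \eqref{e:suggested}, namely
\[
(\offset_{\reachindex t})_{1:m\reachindex} = -\reachab_{\reachindex}(\Aortho, \Bortho)^+ \Aortho^{\reachindex(t+1)} \sat_{r,\zeta}^{\infty}\bigl((\Aortho\transp)^{\reachindex t}\stortho_{\reachindex t}\bigr).
\]
Feasibility, i.e., $\lVert (\offset_{\reachindex t})_{1:m\reachindex}\rVert_\infty \le U_{\max}$, follows from the standard bound
\[
\lVert (\offset_{\reachindex t})_{1:m\reachindex} \rVert_\infty \le \sigma_1\bigl(\reachab_\reachindex(\Aortho,\Bortho)^+\bigr)\sqrt{d_o}\,\zeta \le U_{\max},
\]
by the choice of $\zeta$, using that $\Aortho$ is orthogonal. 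Substituting the candidate policy and conditioning on $\sigalg_{\reachindex t}$, the Bernoulli multiplier $\cnoise_{\reachindex t}$ contributes its mean $p$, yielding componentwise
\[
\EE^{\sigalg_{\reachindex t}}\bigl[y^{(i)}_{\reachindex(t+1)} - y^{(i)}_{\reachindex t}\bigr] = -p\zeta \quad \text{whenever } y^{(i)}_{\reachindex t} > r,
\]
and the symmetric inequality $+p\zeta$ whenever $y^{(i)}_{\reachindex t} < -r$.

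Given the bound on the control, on $\zeta$, and the bounded fourth moment of $\wnoise_t^o$, a routine computation yields a constant $M>0$ with
\[
\EE\!\left[\left| y^{(i)}_{\reachindex(t+1)} - y^{(i)}_{\reachindex t}\right|^4 \,\Big|\, y^{(i)}_0,\dots,y^{(i)}_{\reachindex t}\right]\le M.
\]
Applying Theorem \ref{t:PemRos-99} to the processes $(y^{(i)}_{\reachindex t})_+$ and $(-y^{(i)}_{\reachindex t})_+$ gives uniform second-moment bounds on both halves, hence on $\norm{y_{\reachindex t}}^2$, hence on $\norm{\stortho_{\reachindex t}}^2$ since $\stortho_{\reachindex t} = \Aortho^{\reachindex t} y_{\reachindex t}$ and $\Aortho$ is orthogonal. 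The standard inter-sample argument of \cite{ref:RamChaMilHokLyg-10} lifts the mean-square bound on the subsampled process $(\stortho_{\reachindex t})_{t\in\Nz}$ to the original $(\stortho_t)_{t\in\Nz}$, completing the proof. The only subtlety relative to Lemma \ref{t:msbsingle} is the coupling of dropouts across the burst, but since the proposed policy uses only the offset block, a \emph{single} Bernoulli factor enters each expected drift, reproducing the $-p\zeta$ bound; no step of the argument is otherwise affected.
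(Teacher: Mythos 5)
Your proof is correct and follows essentially the same route as the paper's: subsample by \(\reachindex\), apply the saturated pseudo-inverse offset policy \eqref{e:suggested} with zero feedback gain so that a single Bernoulli factor \(\cnoise_{\reachindex t}\) multiplies the whole burst and yields the componentwise conditional drift \(-p\zeta\), then invoke Theorem \ref{t:PemRos-99} on the positive and negative parts and lift back to the original process. The only nitpick is that the bounded fourth moment of the noise comes from the standing assumption on \((\wnoise_t)_{t\in\Nz}\) in \eqref{e:system}, not from Assumption \ref{a:blanket}.
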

\begin{proof}
Consider the \(\reachindex\)-subsampled system \eqref{e:ortho system}:
			\begin{align}\label{e:subsampled system burst}
				\stortho_{\reachindex (t+1)}	& = \Aortho^\reachindex \stortho_{\reachindex t} + \reachab_{\reachindex}(\Aortho, \Bortho) \pmat{\control_{\reachindex t}\\ \vdots \\ \control_{\reachindex(t+1) - 1}}\cnoise_{\reachindex t}\nn\\
					& \quad + \reachab_{\reachindex}(\Aortho, I_{d_o}) \pmat{\wnoise^o_{\reachindex t}\\ \vdots \\ \wnoise^o_{\reachindex (t+1) - 1}}
			\end{align}
			Define \(y_{\reachindex t} \Let (\Aortho\transp)^{\reachindex t} \stortho_{\reachindex t}\) for each \(t\in\Nz\). It follows that
			\begin{align*}
				y_{\reachindex(t+1)} & = y_{\reachindex t} + (\Aortho\transp)^{\reachindex(t+1)} \reachab_\reachindex(\Aortho, \Bortho) \control_{\reachindex t:\reachindex}\cnoise_{\reachindex t}\\
					& \qquad + (\Aortho\transp)^{\reachindex(t+1)} \reachab_\reachindex(\Aortho, I_{d_o}) \wnoise^o_{\reachindex t:\reachindex}.
			\end{align*}
			We substitute \eqref{e:suggested} in above equation.
Then
			\begin{align*}
				y_{\reachindex(t+1)} & = y_{\reachindex t} + (\Aortho\transp)^{k(t+1)} \reachab_{\reachindex}(\Aortho, \Bortho) \times \\
				& \left(-\reachab_\reachindex(\Aortho, \Bortho)^+ \Aortho^{\reachindex (t+1)} \sat_{r, \zeta}^\infty\bigl((\Aortho\transp)^{\reachindex t} \stortho_{\reachindex t}\bigr)\right)\cnoise_{\reachindex t} + \\
				& \qquad + (\Aortho\transp)^{\reachindex(t+1)} \reachab_\reachindex(\Aortho, I_{d_o}) \wnoise^o_{\reachindex t:\reachindex}.
			\end{align*}

Let us verify the hypotheses of Theorem \ref{t:PemRos-99}. Let \(\sigalg_{\reachindex t}\) denote the \(\sigma\)-algebra generated by \(\{\st_{\reachindex \ell}\,|\, \ell = 0, 1, \ldots, t\}\). 
			For the \(i\)-th component \(y^{(i)}_{\reachindex t}\) of \(y_{\reachindex t}\) we see that 
			\begin{align*}
				& \EE^{\sigalg_{\reachindex t}}\bigl[ y^{(i)}_{\reachindex (t+1)} - y^{(i)}_{\reachindex t}\bigr]\\
				& = \EE^{\sigalg_{\reachindex t}}\Bigl[\bigl((\Aortho\transp)^{\reachindex(t+1)} \reachab_\reachindex(\Aortho, \Bortho) (\control)_{\reachindex t:\reachindex}\cnoise_{\reachindex t}\\
				& \qquad + (\Aortho\transp)^{\reachindex(t+1)} \reachab_\reachindex(\Aortho, I_{d_o}) \wnoise^o_{\reachindex t:\reachindex}\bigr)^{(i)}\Bigr]\\
				& = p\EE^{\sigalg_{\reachindex t}}\Bigl[ \sat_{r, \zeta}^{\infty}\bigl(y^{(i)}_{\reachindex t}\bigr)\Bigr],
			\end{align*}
where $p = \EE^{\sigalg_{\reachindex t}}[\cnoise_{\reachindex t}]$. 
Rest of the proof follows similarly as in Lemma \ref{t:msbsingle}.
		\end{proof}
\begin{lemma}\label{t:objective}
Consider the system \eqref{e:system}, for every $t=0, \reachindex, 2\reachindex, \cdots,$ the problem \eqref{e:opt problem} under policy \eqref{e:policy}, transmission protocol \ref{a:burst} and control set \eqref{e:controlset} is convex quadratic with respect to the decision variable $(\offset_t,\gain_t)$. The objective function \eqref{e:burst opt control problem} is given by \eqref{e:program}.
\end{lemma}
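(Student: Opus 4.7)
The plan is to mirror the derivation of Lemma~\ref{t:objectivesingle} step for step, with the single structural change that the matrix $\calS$ premultiplying $\offset_t$ in \eqref{e:policyseq} is now replaced by the burst-transmission matrix $\calK$ defined below \eqref{e:policyburst}. Convexity is established first by exactly the argument used there: $\calQ$ is symmetric and positive semidefinite and $\calR$ is symmetric and positive definite, so the quadratic form $\inprod{\st_{t:N+1}}{\calQ\st_{t:N+1}} + \inprod{\control^a_{t:N}}{\calR\control^a_{t:N}}$ is convex in $(\st_{t:N+1},\control^a_{t:N})$; meanwhile, for every realization of the noise and dropout sequences, \eqref{e:augmented} and \eqref{e:policyburst} make both $\st_{t:N+1}$ and $\control^a_{t:N}$ affine in $(\offset_t,\gain_t)$; the control constraint set in \eqref{e:controlset} is polyhedral and hence convex affine in the decision variables; expectation preserves convexity, so the objective in \eqref{e:burst opt control problem} is convex quadratic.

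For the closed-form expression I would substitute \eqref{e:policyburst} into the expected cost and expand exactly as in the chain of equalities leading to \eqref{e:programsingle}. The three tools that do the work at every step are: (i) the independence of the dropout process $(\cnoise_t)$ from the plant noise $(\wnoise_t)$, which lets $\calK$ and $\calS$ factor out of any expectation that also contains $\wnoise_{t:N}$ or $\ee(\wnoise_{t:N-1})$; (ii) the zero-mean properties $\EE[\wnoise_{t:N}]=0$ and $\EE[\ee_i(\wnoise_{\cdot})]=0$ (the latter stated just after \eqref{e:controlt}), which annihilate every term that is linear in either noise; and (iii) the identity $\EE[v\transp M v] = \trace(M\,\EE[vv\transp])$ for deterministic $M$, which converts quadratic expectations into traces involving $\Sigma_{\ee}$, $\Sigma_W$, $\Sigma_{\calK}$, and $\Sigma_{\calS}$.

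Only the terms that are pure in $\offset_t$ differ numerically from Lemma~\ref{t:objectivesingle}: the state/offset cross term produces $2\st_t\transp\calA\transp\calQ\calB\mu_{\calK}\offset_t$ instead of $2\st_t\transp\calA\transp\calQ\calB\mu_{\calS}\offset_t$, and the pure quadratic in $\offset_t$ becomes $\trace(\offset_t\transp\Sigma_{\calK}\offset_t)$ by definition of $\Sigma_{\calK}$. The $\gain_t$-contributions are \emph{identical} to those in \eqref{e:programsingle} because the same matrix $\calS$ still premultiplies $\gain_t\ee(\wnoise_{t:N-1})$ in \eqref{e:policyburst}, and the noise-only constant $c_t$ is untouched by dropouts. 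The mixed quadratic coupling between $\calK\offset_t$ and $\calS\gain_t\ee(\wnoise_{t:N-1})$ vanishes by the zero-mean property of $\ee(\wnoise_{t:N-1})$ combined with its independence of $(\calK,\calS)$, so no $\offset_t\transp(\cdot)\gain_t$ term appears. Collecting everything delivers \eqref{e:program}.

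I do not anticipate a genuine obstacle; the exercise is linear-algebraic bookkeeping. The one subtlety is that $\calK$ is driven by the single Bernoulli $\cnoise_t$ at the burst instant whereas $\calS$ is driven by $\reachindex$ independent Bernoullis $\cnoise_t,\dots,\cnoise_{t+\reachindex-1}$, so $\mu_{\calK}$ and $\Sigma_{\calK}$ must be computed directly from the block definition of $\calK$ rather than inferred from $\mu_{\calS}$ and $\Sigma_{\calS}$; this is precisely why they are introduced as separate quantities immediately before the theorem statement.
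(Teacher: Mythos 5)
Your proposal is correct and follows essentially the same route as the paper: the paper's own proof of Lemma~\ref{t:objective} simply states that the argument of Lemma~\ref{t:objectivesingle} carries over verbatim, which is exactly the substitution you carry out (replacing $\calS$ by $\calK$ in the offset terms while the $\gain_t$ terms retain $\mu_{\calS}$ and $\Sigma_{\calS}$, with the cross term killed by $\EE[\ee(\wnoise_{t:N-1})]=0$ and independence of the dropouts from the noise). If anything, your write-up supplies more detail than the published proof, and your closing remark that $\mu_{\calK},\Sigma_{\calK}$ must be computed from the block definition of $\calK$ rather than inferred from $\mu_{\calS},\Sigma_{\calS}$ is accurate.
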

\begin{proof}
\secondrevised{
By the same argument given in the proof of Lemma \ref{t:objectivesingle}, it follows that $V_t$ is convex quadratic in $(\offset_t,\gain_t)$. The objective function of \eqref{e:burst opt control problem} is also derived similarly.}
\end{proof}			


\begin{proof}[Proof of Theorem \ref{th:seq}]
The proof of convexity and the formulation of the objective are given in Lemma \ref{t:objectivesingle}. The constraints \eqref{e:gainStructsingle} can be found in \cite{ref:ChaHokLyg-11}. The proposed control input \eqref{e:policy} satisfies hard input constraint \eqref{e:controlset} as long as the following condition is satisfied: $ \norm{\offset_t + \gain_t \ee(\wnoise_{t:N-1})}_{\infty} \leq U_{\max} $  for all $\ee(\wnoise_{t:N-1})$ such that $\norm{\ee(\wnoise_{t:N-1})}_{\infty} \leq \varphi_{\max}$. 
This is equivalent to the following condition for all $i = 1,\cdots, Nm $:
\begin{align*}
& \max_{\norm{\ee(\wnoise_{t:N-1})}_{\infty} \leq \varphi_{\max}} \abs{ \offset_t^{(i)} + \gain_t^{(i)}\ee(\wnoise_{t:N-1}) }  \leq U_{\max} \\
& \iff \abs{\offset_t^{(i)}} + \norm{\gain_t^{(i)}}_1 \varphi_{\max} \leq U_{\max}.
\end{align*}
  Hence, the constraint \eqref{e:decisionboundsingle} is equivalent to the hard control constraint \eqref{e:controlset}. 
  \end{proof}

\begin{proof}[Proof of Theorem \ref{th:burst}]
Claims follow from Lemma \ref{t:objective} along the same lines of arguments as in the proof of Theorem \ref{th:seq}. 
\end{proof} 	
\begin{proof}[Proof of Theorem \ref{th:rep}]
Claims follow immediately from Lemma \ref{t:objective} with the same lines of arguments as in proof of Theorem \ref{th:seq}. The only difference is that the matrix $\calK$ is replaced by $\calG$ in the present case.
\end{proof}	

\begin{proof}[Proof of Theorem \ref{th:stability}]
\secondrevised{
The constraints \eqref{e:decisonConstraint1single} and \eqref{e:decisonConstraint2single} are obviously convex. 
Now, we show that there exists a sequence of input vectors that is feasible with respect to the input constraint set \eqref{e:controlset}, and satisfies \eqref{e:decisonConstraint1single} and \eqref{e:decisonConstraint2single}.
Consider $\zeta \in \left]0, \frac{U_{\max}}{\sqrt{d_o}\sigma_{1}(\reachab_{\reachindex}(\Aortho,\Bortho)^{+})} \right[ $, $(\offset_t)_{1:\reachindex m} = - \reachab_{\reachindex}(\Aortho, \Bortho)^+ \Aortho^{\reachindex (t+1)} \sat_{r, \zeta}^\infty \bigl((\Aortho\transp)^{\reachindex t} \stortho_{\reachindex t}\bigr), (\gain_t)_{1:\reachindex m} = 0$, and construct $\reachindex$ blocks of control in form of \eqref{decision}. This control is encompassed within the general policy structure \eqref{e:policy}. Moreover, the controls are bounded by $U_{\max}$ by construction, because for $t \in \Nz$ and $\ell \in \{0,1, \cdots, \reachindex -1 \}$,
$\norm{\control_{t + \ell}} \leq \norm{\control_{t : \reachindex}} \leq \sigma_1(\reachab_{\reachindex}(\Aortho, \Bortho)^+)\sqrt{d_o}\zeta \leq U_{\max}$.
Substituting the above input into the left hand side of \eqref{e:decisonConstraint1single}, for every $j = 1, \ldots, d_o$, we get
$ \left( (\Aortho^{\reachindex})\transp \reachab_{\reachindex}(\Aortho, \Bortho)(\offset_t)_{1:\reachindex m} \right)^{(j)} \leq -\zeta$,
whenever $\left( \stortho_{t} \right)^{(j)} \geq r + \epsilon$. Similarly, the above control sequence satisfies \eqref{e:decisonConstraint2single} whenever $\left( \stortho_{t} \right)^{(j)} \leq -r - \epsilon$.  
Therefore, we have shown that there exists a sequence of input vectors that is feasible with respect to the input constraint set \eqref{e:controlset} and satisfies stability constraints \eqref{e:decisonConstraint1single} and \eqref{e:decisonConstraint2single}.\\ 
It is shown in \cite{hokayem2009stochastic} that the Schur stable system, under bounded control inputs, is mean square bounded. Hence, the claim of Theorem \ref{th:stability} is directly implied by Lemma \ref{t:msbsingle} and Lemma \ref{t:msb}. In particular, the mean square boundedness of the closed-loop states under \ref{a:seq} and \ref{a:burst} is implied by Lemma \ref{t:msbsingle} and \ref{t:msb}, respectively. The mean square boundedness under \ref{a:repetitive} is along the same line of arguments as in Lemma \ref{t:msb}. 
}
\end{proof}
\bibliographystyle{IEEEtran}
\bibliography{refs}

\begin{thebibliography}{10}
\providecommand{\url}[1]{#1}
\csname url@samestyle\endcsname
\providecommand{\newblock}{\relax}
\providecommand{\bibinfo}[2]{#2}
\providecommand{\BIBentrySTDinterwordspacing}{\spaceskip=0pt\relax}
\providecommand{\BIBentryALTinterwordstretchfactor}{4}
\providecommand{\BIBentryALTinterwordspacing}{\spaceskip=\fontdimen2\font plus
\BIBentryALTinterwordstretchfactor\fontdimen3\font minus
  \fontdimen4\font\relax}
\providecommand{\BIBforeignlanguage}[2]{{%
\expandafter\ifx\csname l@#1\endcsname\relax
\typeout{** WARNING: IEEEtran.bst: No hyphenation pattern has been}%
\typeout{** loaded for the language `#1'. Using the pattern for}%
\typeout{** the default language instead.}%
\else
\language=\csname l@#1\endcsname
\fi
#2}}
\providecommand{\BIBdecl}{\relax}
\BIBdecl

\bibitem{ref:May-14}
D.~Q. Mayne, ``Model predictive control: Recent developments and future
  promise,'' \emph{Automatica}, vol.~50, no.~12, pp. 2967 -- 2986, 2014.

\bibitem{mesbah_16_survey}
A.~Mesbah, ``Stochastic model predictive control: An overview and perspectives
  for future research,'' \emph{IEEE Control Systems Magazine}, 2016.

\bibitem{ref:Grune-11}
L.~Gr{\"u}ne and J.~Pannek, \emph{Nonlinear Model Predictive Control}.\hskip
  1em plus 0.5em minus 0.4em\relax Springer, 2011.

\bibitem{ref:rawlings-09}
J.~B. Rawlings and D.~Q. Mayne, \emph{Model Predictive Control: Theory and
  Design}.\hskip 1em plus 0.5em minus 0.4em\relax Nob Hill, Madison, Wisconsin,
  2009.

\bibitem{ref:Bemporad-99}
A.~Bemporad and M.~Morari, ``{Control of systems integrating logic, dynamics,
  and constraints},'' \emph{Automatica}, vol.~35, no.~3, pp. 407--427, 1999.

\bibitem{ref:Rossiter-98}
J.~Rossiter, B.~Kouvaritakis, and M.~Rice, ``{A numerically robust state-space
  approach to stable-predictive control strategies},'' \emph{Automatica},
  vol.~34, no.~1, pp. 65--73, 1998.

\bibitem{ref:Kerrigan-04}
E.~C. Kerrigan and J.~M. Maciejowski, ``{Feedback min-max model predictive
  control using a single linear program: robust stability and the explicit
  solution},'' \emph{International Journal of Robust and Nonlinear Control},
  vol.~14, no.~4, pp. 395--413, 2004.

\bibitem{ref:Maciejowski-09}
E.~Hartley and J.~Maciejowski, ``Initial tuning of predictive controllers by
  reverse engineering,'' in \emph{European Control Conference (ECC), 2009}, Aug
  2009, pp. 725--730.

\bibitem{ref:Marruedo-02}
D.~Marruedo, T.~Alamo, and E.~Camacho, ``{Input-to-state stable {MPC} for
  constrained discrete-time nonlinear systems with bounded additive
  uncertainties},'' in \emph{Proceedings of the 41st IEEE Conf. on Decision and
  Control, 2002.}, vol.~4, 2002.

\bibitem{ref:Bayer-13}
F.~Bayer, B.~Mathias, and F.~Allgower, ``{Discrete-time Incremental ISS: A
  Framework for Robust {NMPC}},'' \emph{European Control Conference}, pp.
  2068--2073, 2013.

\bibitem{ref:rakovic-05}
S.~V. Rakovic, E.~C. Kerrigan, K.~I. Kouramas, and D.~Q. Mayne, ``Invariant
  approximations of the minimal robust positively invariant set,'' \emph{IEEE
  Trans. on Auto. Control}, vol.~50, no.~3, pp. 406--410, 2005.

\bibitem{ref:rakovic-12}
S.~V. Rakovic, B.~Kouvaritakis, M.~Cannon, C.~Panos, and R.~Findeisen,
  ``Parameterized tube model predictive control,'' \emph{IEEE Trans. on Auto.
  Control}, vol.~57, no.~11, pp. 2746--2761, 2012.

\bibitem{ref:maeder-09}
U.~Maeder, F.~Borrelli, and M.~Morari, ``Linear offset-free model predictive
  control,'' \emph{Automatica}, vol.~45, no.~10, pp. 2214--2222, 2009.

\bibitem{ref:quevedo-04}
D.~E. Quevedo, G.~C. Goodwin, and J.~A. De~Dona, ``Finite constraint set
  receding horizon quadratic control,'' \emph{International Journal of Robust
  and Nonlinear Control}, vol.~14, no.~4, pp. 355--377, 2004.

\bibitem{li2014cloud}
Z.~Li, I.~Kolmanovsky, E.~Atkins, J.~Lu, D.~Filev, and J.~Michelini, ``Cloud
  aided semi-active suspension control,'' in \emph{IEEE Symposium on
  Computational Intelligence in Vehicles and Transportation Systems (CIVTS),
  2014}.\hskip 1em plus 0.5em minus 0.4em\relax IEEE, 2014, pp. 76--83.

\bibitem{li2015h}
Z.~Li, I.~Kolmanovsky, E.~Atkins, J.~Lu, and D.~Filev, ``{H}$_{\infty}$
  filtering for cloud-aided semi-active suspension with delayed road
  information,'' \emph{IFAC-PapersOnLine}, vol.~48, no.~12, pp. 275--280, 2015.

\bibitem{quevedo_jurado_TAC}
D.~E. Quevedo and I.~Jurado, ``Stability of sequence-based control with random
  delays and dropouts,'' \emph{IEEE Trans. on Auto. Control}, vol.~59, no.~5,
  pp. 1296--1302, 2014.

\bibitem{ref:mayne-00}
D.~Mayne, J.~Rawlings, C.~Rao, and P.~Scokaert, ``{Constrained model predictive
  control: Stability and optimality},'' vol.~36, no.~6, pp. 789--814, 2000.

\bibitem{ref:KeeGil-88}
S.~S. Keerthi and E.~G. Gilbert, ``Optimal infinite-horizon feedback laws for a
  general class of constrained discrete-time systems: stability and
  moving-horizon approximations,'' \emph{Journal of Optimization Theory and
  Applications}, vol.~57, no.~2, pp. 265--293, 1988.

\bibitem{chatterjee-15}
D.~Chatterjee and J.~Lygeros, ``On stability and performance of stochastic
  predictive control techniques,'' \emph{IEEE Trans. on Auto. Control},
  vol.~60, no.~2, pp. 509--514, Feb 2015.

\bibitem{ref:ChaRamHokLyg-12}
D.~Chatterjee, F.~Ramponi, P.~Hokayem, and J.~Lygeros, ``On mean square
  boundedness of stochastic linear systems with bounded controls,''
  \emph{Systems {\&} Control Letters}, vol.~61, no.~2, pp. 375--380, 2012.

\bibitem{bernardini2010model}
D.~Bernardini, M.~Donkers, A.~Bemporad, and W.~Heemels, ``A model predictive
  control approach for stochastic networked control systems,'' in \emph{2nd
  IFAC Workshop on Estimation and Control of Networked Systems, Annecy,
  France}, vol.~3, 2010, pp. 7--12.

\bibitem{Dolgov2013}
J.~Fischer, M.~Dolgov, and U.~D. Hanebeck, ``On stability of sequence-based
  {LQG} control,'' in \emph{52nd Conf. on Decision and Control (CDC),
  2013}.\hskip 1em plus 0.5em minus 0.4em\relax IEEE, 2013, pp. 6627--6633.

\bibitem{Fischer2013}
J.~Fischer, A.~Hekler, M.~Dolgov, and U.~D. Hanebeck, ``{Optimal Sequence-Based
  LQG Control over TCP-like Networks Subject to Random Transmission Delays and
  Packet Losses},'' \emph{Proceedings of the 2013 American Control Conference
  (ACC 2013)}, pp. 1543--1549, 2013.

\bibitem{Hekler2012}
A.~Hekler, J.~Fischer, and U.~D. Hanebeck, ``{Control over Unreliable Networks
  Based on Control Input Densities},'' \emph{Proceedings of the 15th
  International Conf. on Information Fusion (Fusion 2012)}, pp. 1277--1283,
  2012.

\bibitem{Quevedo2011}
D.~E. Quevedo and D.~Ne\v{s}i\'{c}, ``{Input-to-state stability of packetized
  predictive control over unreliable networks affected by packet-dropouts},''
  \emph{IEEE Trans. on Auto. Control}, vol.~56, no.~2, pp. 370--375, 2011.

\bibitem{Quevedo-12}
------, ``{Robust stability of packetized predictive control of nonlinear
  systems with disturbances and Markovian packet losses},'' \emph{Automatica},
  vol.~48, no.~8, pp. 1803--1811, 2012.

\bibitem{kumar1986stochastic}
P.~R. Kumar and P.~Varaiya, \emph{Stochastic Systems: Estimation,
  Identification and Adaptive Control}.\hskip 1em plus 0.5em minus 0.4em\relax
  Prentice-Hall, Inc., 1986.

\bibitem{goulart-06}
P.~J. Goulart, E.~C. Kerrigan, and J.~M. Maciejowski, ``Optimization over state
  feedback policies for robust control with constraints,'' \emph{Automatica},
  vol.~42, no.~4, pp. 523--533, 2006.

\bibitem{ref:HokChaRamChaLyg-10}
P.~Hokayem, D.~Chatterjee, F.~Ramponi, G.~Chaloulos, and J.~Lygeros, ``Stable
  stochastic receding horizon control of linear systems with bounded control
  inputs,'' in \emph{Proceedings of 19th International Symposium on
  Mathematical Theory of Networks and Systems, Budapest, Hungary}, 2010, pp.
  31--36.

\bibitem{ref:amin-10}
D.~Chatterjee, S.~Amin, P.~Hokayem, J.~Lygeros, and S.~S. Sastry, ``Mean-square
  boundedness of stochastic networked control systems with bounded control
  inputs,'' in \emph{49th IEEE Conf. on Decision and Control (CDC),
  2010}.\hskip 1em plus 0.5em minus 0.4em\relax IEEE, 2010, pp. 4759--4764.

\bibitem{ref:quevedo-15}
D.~E. Quevedo, P.~K. Mishra, R.~Findeisen, and D.~Chatterjee, ``A stochastic
  model predictive controller for systems with unreliable communications,'' in
  \emph{5th {IFAC} Conf. on Nonlinear Model Predictive Control {NMPC} 2015
  Seville, Spain, September 2015}, vol.~48, no.~23, 2015, pp. 57 -- 64.

\bibitem{prabhat2016}
P.~K. Mishra, D.~Chatterjee, and D.~E. Quevedo, ``Stable stochastic predictive
  controller under unreliable up-link,'' in \emph{European Control Conf. (ECC),
  2016}, July 2016, pp. 1444--1449.

\bibitem{bernstein2009matrix}
D.~S. Bernstein, \emph{Matrix Mathematics: Theory, Facts, and Formulas}.\hskip
  1em plus 0.5em minus 0.4em\relax Princeton University Press, 2009.

\bibitem{ref:ChaHokLyg-11}
D.~Chatterjee, P.~Hokayem, and J.~Lygeros, ``Stochastic receding horizon
  control with bounded control inputs---a vector-space approach,'' \emph{IEEE
  Trans. on Auto. Control}, vol.~56, no.~11, pp. 2704--2711, 2011.

\bibitem{ref:quevedo-03}
D.~E. Quevedo, G.~C. Goodwin, and J.~S. Welsh, ``Minimizing down-link traffic
  in networked control systems via optimal control techniques,'' in \emph{42nd
  IEEE Conf. on Decision and Control, 2003.}, vol.~2.\hskip 1em plus 0.5em
  minus 0.4em\relax IEEE, 2003, pp. 1200--1205.

\bibitem{ref:aboudolas-10}
K.~Aboudolas, M.~Papageorgiou, A.~Kouvelas, and E.~Kosmatopoulos, ``A
  rolling-horizon quadratic-programming approach to the signal control problem
  in large-scale congested urban road networks,'' \emph{Transportation Research
  Part C: Emerging Technologies}, vol.~18, no.~5, pp. 680--694, 2010.

\bibitem{ref:garcial-89}
C.~E. Garcia, D.~M. Prett, and M.~Morari, ``Model predictive control: theory
  and practice -- a survey,'' \emph{Automatica}, vol.~25, no.~3, pp. 335--348,
  1989.

\bibitem{ref:lofberg-03}
J.~L{\"o}fberg, ``{Approximations of closed-loop minimax {MPC}},'' in
  \emph{42nd IEEE Conf. on Decision and Control}, vol.~2, 2003, pp. 1438--1442.

\bibitem{cinquemani-11}
E.~Cinquemani, M.~Agarwal, D.~Chatterjee, and J.~Lygeros, ``Convexity and
  convex approximations of discrete-time stochastic control problems with
  constraints,'' \emph{Automatica}, vol.~47, no.~9, pp. 2082--2087, 2011.

\bibitem{ref:Hokayem-12}
P.~Hokayem, E.~Cinquemani, D.~Chatterjee, F.~Ramponi, and J.~Lygeros,
  ``{Stochastic receding horizon control with output feedback and bounded
  controls},'' \emph{Automatica}, vol.~48, no.~1, pp. 77--88, 2012.

\bibitem{hokayem2009stochastic}
P.~Hokayem, D.~Chatterjee, and J.~Lygeros, ``On stochastic receding horizon
  control with bounded control inputs,'' in \emph{Proceedings of the 48th IEEE
  Conf. on Decision and Control, held jointly with the 28th Chinese Control
  Conference}.\hskip 1em plus 0.5em minus 0.4em\relax IEEE, 2009, pp.
  6359--6364.

\bibitem{ref:lofberg-04}
J.~L{\"o}fberg, ``{YALMIP}: A toolbox for modeling and optimization in
  matlab,'' in \emph{International Symposium on Computer Aided Control Systems
  Design, 2004}.\hskip 1em plus 0.5em minus 0.4em\relax IEEE, 2004, pp.
  284--289.

\bibitem{ref:toh-06}
K.~Toh, M.~J. Todd, and R.~H. T{\"u}t{\"u}nc{\"u}, ``On the implementation and
  usage of {SDPT3}--a matlab software package for semidefinite-quadratic-linear
  programming, version 4.0,'' in \emph{Handbook on semidefinite, conic and
  polynomial optimization}.\hskip 1em plus 0.5em minus 0.4em\relax Springer,
  2012, pp. 715--754.

\bibitem{ref:PemRos-99}
R.~Pemantle and J.~S. Rosenthal, ``Moment conditions for a sequence with
  negative drift to be uniformly bounded in {$L^r$},'' \emph{Stochastic
  Processes and their Applications}, vol.~82, no.~1, pp. 143--155, 1999.

\bibitem{meyn2012markov}
S.~P. Meyn and R.~L. Tweedie, \emph{Markov Chains and Stochastic
  Stability}.\hskip 1em plus 0.5em minus 0.4em\relax Springer Science \&
  Business Media, 2012.

\bibitem{ref:robert-13}
C.~Robert and G.~Casella, \emph{Monte Carlo Statistical Methods}.\hskip 1em
  plus 0.5em minus 0.4em\relax Springer Science \& Business Media, 2013.

\bibitem{queahl13a}
D.~E. Quevedo, A.~Ahlen, and K.~H. Johansson, ``State estimation over sensor
  networks with correlated wireless fading channels,'' \emph{IEEE Trans. on
  Auto. Control}, vol.~58, no.~3, pp. 581--593, 2013.

\bibitem{ref:RamChaMilHokLyg-10}
F.~Ramponi, D.~Chatterjee, A.~Milias-Argeitis, P.~Hokayem, and J.~Lygeros,
  ``Attaining mean square boundedness of a marginally stable stochastic linear
  system with a bounded control input,'' \emph{IEEE Trans. on Auto. Control},
  vol.~55, no.~10, pp. 2414--2418, 2010.

\end{thebibliography}

\end{document}